\newcommand{  \red  }{\color{black}}
\newcommand{ \black }{\color{black} }
\newcommand{\Red}{\color{black}}
\newtheorem{thm}{Theorem}{\bf}{\it}
\newtheorem{lem}[thm]{Lemma}{\bf}{\it}
{\bf}{\it}
{\bf}{\it}
{\bf}{\it}
{\bf}{\it}
{\bf}{\it}
{\bf}{\it}
{\bf}{\it}
{\bf}{\it}
{\bf}{\it}
\theoremstyle{definition}
\newtheorem{defn}[thm]{Definition}{\bf}{\rm}
{\bf}{\rm}
{\bf}{\rm}
\newtheorem{rem}[thm]{Remark}{\bf}{\rm}
\newcommand{\nab}{\langle \nabla \rangle_c}
\newcommand{\nabo}{\langle \nabla \rangle_0}
\newcommand {\aplt} {\ {\raise-.5ex\hbox{$\buildrel<\over\sim$}}\ }
\newcommand{\e}{\mathrm{e}}
\newcommand{\Ac}{\mathcal{A}_c}
\newcommand{\dd}{\mathrm{d}}
\newcommand{\coso}{\mathrm{cos}\left(\tau \langle \nabla \rangle_0\right)}
\newcommand{\sino}{\mathrm{sin}\left(\tau \langle \nabla \rangle_0\right)}
\newcommand{\sinco}{\mathrm{sinc}\left(\tau \langle \nabla \rangle_0\right)}
\newcommand{\ua}{u^\ast}
\renewcommand{\u}{u_\ell}
\newcommand{\F}{F_\ell}
\newcommand{\n}{n_\ell}
\newcommand{\np}{\dot{n}_\ell}
\renewcommand{\S}{S^F_\ell}
\newcommand{\tl}{(t_\ell)}
\newcommand{\B}{B_{t_\ell,r}}
\begin{document}

\author{Simon Baumstark}
\address{Fakult\"{a}t f\"{u}r Mathematik, Karlsruhe Institute of Technology,
Englerstr. 2, 76131 Karlsruhe, Germany}
\email{simon.baumstark@kit.edu}

\author{Katharina Schratz}
\address{Fakult\"{a}t f\"{u}r Mathematik, Karlsruhe Institute of Technology,
Englerstr. 2, 76131 Karlsruhe, Germany}
\email{katharina.schratz@kit.edu}

\title[Uniformly accurate oscillatory  integrators for the KGZ system]{Uniformly accurate   oscillatory  integrators for the Klein-Gordon-Zakharov system from low- to high-plasma frequency regimes}

\begin{abstract}
We present a novel class of oscillatory  integrators for the Klein-Gordon-Zakharov  system which are  uniformly accurate with respect to the  plasma frequency $c$. Convergence holds from the slowly-varying low-plasma  up to the highly oscillatory high-plasma frequency regimes without any step size restriction and, especially, uniformly in $c$. The introduced schemes are moreover asymptotic consistent and approximates the solutions of the corresponding Zakharov limit system in the high-plasma frequency limit ($c \to \infty$).  \Red We in particular present the construction of the first- and second-order uniformly accurate oscillatory integrators and establish rigorous, uniform error estimates. Numerical experiments underline our theoretical convergence results.\black
\end{abstract}

\keywords{}

\maketitle

\section{Introduction}
The Klein-Gordon-Zakharov (KGZ) system
\begin{equation}\label{eq:kgzOr}
\begin{aligned}
 c^{-2}\partial_{tt}z - \Delta z +c^2 z &=  - n z,\qquad\,\,\,  z(0) = z_0,\, \partial_t z(0) = c^2z'_0,\\
 \partial_{tt}n-\Delta n & = \Delta \vert z\vert^2,\qquad  n(0) = n_0, \, \partial_t n(0) = \dot{n}_0
\end{aligned}
\end{equation}
describes the interaction of Langmuir waves (oscillations of the electron density) with ion sound waves in a plasma. For existence and uniqueness of (global) smooth solutions and physical applications of the system we refer to \cite{Bell06,LWP1a,LWP1b,LWP2} and the references therein.

The so-called  \textit{low-plasma frequency regime} $c=1$ of the Klein-Gordon-Zakharov system  is nowadays well understood and extensively studied numerically, see, e.g.,   \cite{WCZ} \Red for an energy conservative finite difference method \black and \cite{BaoKGZ1,Zhao16} for exponential wave integrator methods.

From an analytical point of view, also the \emph{high-plasma frequency regime} $ c \gg 1$ has  gained a lot of attention lately and is, meanwhile, in large parts well understood mathematically. In particular, the high-plasma frequency limit, i.e., $c \to \infty$, where the Klein-Gordon-Zakharov system \eqref{eq:kgzOr} converges to the Zakharov system 
\begin{equation}\label{zak}
\begin{aligned}
2  i \partial_t z\textstyle - \Delta z & =  - n z, \\
  \partial_{tt}n-\Delta n & =  \frac12 \Delta \vert z \vert^2
\end{aligned}
\end{equation}
is nowadays extensively studied mathematically, see, e.g., \cite{Berg96,LWP1a,LWP1b,Texier07} and the references therein.

In contrast, only very little is known in the numerical analysis of the KGZ system \eqref{eq:kgzOr}  in \textit{weakly to strongly high-plasma frequency regimes} $c>1$ up to $c \gg 1$. The main challenge in these regimes lies in the fact that the solution becomes highly oscillatory in time. Classical numerical schemes fail to resolve these oscillations correctly, and, in particular, severe step size restrictions need to be imposed to allow a sufficient close approximation to exact solutions. This, however, leads to huge computational efforts and non optimal convergence. For an introduction to the numerical analysis of highly oscillatory problems we refer to \cite{EFHI09,HLW} and in the context of the classical Klein-Gordon equation in particular to \cite{BG,FS13}, as well as the  references therein.

In recent years, many achievements have been accomplished in the numerical analysis of the classical Klein-Gordon equation (i.e., $n = - \vert z \vert^2$ in KGZ)
\begin{equation}\label{kgr}
c^{-2}\partial_{tt} z - \Delta z + c^2 z = |z|^2 z
\end{equation}
in the highly oscillatory so-called non-relativistic regime $c \gg 1$. In particular, uniformly accurate schemes for the classical Klein-Gordon equation~\eqref{kgr}  were derived in \cite{BaoZ,ChC,BFS17} based on mutliscale and modulated Fourier expansion techniques. For the latter, see also  \cite{CHL08a,CoHaLu03,HL,HLW} and the references therein. The great benefit of \emph{uniformly accurate numerical schemes} lies  in their optimal approximation property: They allow convergence independently of the parameter $c$, i.e., from classical slowly-varying $c=1$ up to  highly oscillatory $c \gg 1$ regimes.  Nevertheless, note that the Klein-Gordon equation~\eqref{kgr} benefits from a much simpler mathematical structure than the KGZ system \eqref{eq:kgzOr} which makes it, compared to the KGZ system, a much easier task to construct uniformly accurate methods and establish the corresponding rigorous error estimates.

A uniformly accurate multiscale time integrator spectral method for  the KGZ system  \eqref{eq:kgzOr} was recently introduced in \cite{BaoKGZUA} and its convergence was extensively studied numerically. However, a rigorous convergence analysis is still lacking and, up to our knowledge, no rigorous error analysis  for numerical schemes for the KGZ system~\eqref{eq:kgzOr} in weakly to strongly high-plasma frequency regimes is known so far due to the difficult coupling of the problem.

In this work we introduce a novel concept of \emph{uniformly accurate oscillatory  integrators for the KGZ system}~\eqref{eq:kgzOr} which converge uniformly in the parameter $c$. The new techniques developed in this work in particular allow us  to establish rigorous  error estimates not only in low-plasma, but for the first time also  in weakly to strongly high-plasma frequency regimes.  Our idea is thereby inspired by the recent work \cite{HerrS17} which allowed us to \emph{numerically overcome the loss of derivative}  within the Zakharov limit system \eqref{zak}  as well as  techniques related to  \emph{twisting the variable} in order to precisely capture the oscillations within the solution. The latter was introduced for the classical Klein-Gordon equation in \cite{BFS17} (see also \cite{HS16,OS16} for similar ideas in the context of Korteweg-de Vries and nonlinear Schr\"odinger equations, respectively). The proposed novel class of integrators is in particular asymptotic consistent and converges in the high-plasma frequency limit, i.e., for~$c \to \infty$, towards solutions of the corresponding Zakharov limit system \eqref{zak}.

Note that in  contrast to previous works (e.g., \cite{BaoZ,BFS17,ChC,HerrS17}) the construction of the numerical schemes and their corresponding error analysis is here  much more challenging due to the the resonant coupling of the highly oscillatory parts triggered by the critical high-plasma frequency~$c$ and the loss of derivative stemming from the nonlinear wave coupling.   \\

For practical implementation issues, we will consider periodic boundary conditions, i.e., consider $z$ and $n$ as functions defined on $(t,x)\in \mathbb{R}\times \mathbb{T}^d$ with values in $\mathbb{R}$, a finite time interval~$(0,T)$ and smooth initial values. Note that our result can be easily extended to complex valued solutions $z(t,x) \in \mathbb{C}$, however, for clarity of presentation we restrict our attention to the real setting. Furthermore, in the following we assume that $r>d/2$ and, to simplify notation, we restrict our attention to  dimensions $d\leq 3$. We denote by $\Vert \cdot \Vert_r$ the standard $H^r= H^r(\mathbb{T}^d)$ Sobolev norm and exploit the well-known bilinear estimate
\begin{equation}\label{bil}
\Vert f g \Vert_r \leq K_{r,d} \Vert f \Vert_r \Vert g \Vert_r
\end{equation}
which holds for some constant $K_{r,d}>0$.

\section{Reformulation of the system}
For a given $c>0$ we define the following operator \emph{(Japanese bracket)}
\begin{equation*}
\nab = \sqrt{-\Delta+c^2}.
\end{equation*}
Note that $\nab$ is well defined since $-\Delta$ is a positive operator. Furthermore, it is a local operator considered on $\mathbb{T}^d$ with its $k-$th Fourier coefficient given by
\begin{equation}\label{def:cnabF}
\left(\nab\right)_k = \sqrt{|k|^2+c^2}.
\end{equation}
Next  we rewrite the Klein-Gordon part $z$ in \eqref{eq:kgzOr} as a first-order system in time via the transformation (see, e.g., \cite{LWP1a,LWP1b})
\begin{equation}
\begin{aligned} \label{eq:ansatz}
u &= z - ic^{-1} \nab^{-1} \partial_t z
\end{aligned}
\end{equation}
such that as $z(t,x) \in\mathbb{R}$ we have
\begin{equation} 
\begin{aligned}\label{z}
z=\frac{1}{2}(u+\overline{u}) .
\end{aligned}
\end{equation}
With the transformation \eqref{eq:ansatz} at hand, the original problem \eqref{eq:kgzOr} can be rewritten as follows
\begin{equation}
\begin{aligned}\label{kgz}
i \partial_t u & = - c \nab u  -  \frac12 c\nab^{-1} n (u+\overline u),\\
 \partial_{tt}{n} & = \Delta n + \frac14 \Delta \vert u+ \overline u\vert^2.
\end{aligned}
\end{equation}

\begin{rem}[Nonlinear coupling]  Note that the coupling in the Klein-Gordon and wave part is driven by the operator $c \nab^{-1}$ and $\nabo$, respectively. With the aid of the Fourier expansion we easily see that the   coupling operator  $c \nab^{-1} \times \nabo$ satisfies
\begin{align*}
\left \Vert c \nab^{-1} \nabo f \right \Vert_r^2 = \sum_{k \in \mathbb{Z}} 
\left \vert \frac{c k }{\sqrt{c^2+k^2}} \right \vert^2\vert \hat{f}_k\vert^2
\end{align*}
which implies that
\[
\left \Vert c \nab^{-1} \nabo f \right \Vert_r^2 \leq c \Vert f \Vert_{r} \qquad \text{as well as} \qquad
\left \Vert c \nab^{-1} \nabo f \right \Vert_r^2 \leq  \Vert f \Vert_{r+1}.
\]
From the first bound we can easily deduce that no loss of derivative occurs when $c=\mathcal{O}(1)$, and hence the KGZ system \eqref{eq:kgzOr} can be solved  much more easily in the low-plasma frequency regime $c =1$. However, standard techniques fail in the high-plasma frequency regime $c \gg 1$ due to the loss of derivative highlighted through the second bound.
\end{rem}

To overcome this \emph{loss of derivative } in the high-plasma frequency regime we pursue the following strategy: Inspired by the numerical analysis of the Zakharov system given in \cite{HerrS17}, see also~\cite{OzaT92} for the original idea in context of the local wellposedness analysis of the Zakharov system, we introduce the new variable
\[
F = \partial_t u
\]
and will further look at  \eqref{kgz} as a system in $(u,\partial_t u ,n,\partial_t n) = (u,F,n,\dot{n})$. Note that with this notation at hand the equation in $u$ given in \eqref{kgz} can be expressed as follows
\begin{equation}\label{0u}
c \nab u  = - i F  -  \frac12 c \nab^{-1} n (u+\overline u).
\end{equation}
Furthermore, taking the time derivative in the first line of \eqref{kgz} yields by the product formula that
\begin{equation}
\begin{aligned}\label{F0}
& i \partial_t F = - c \nab F    - \frac12 c \nab^{-1} \Big\{ \partial_t n (u+\overline u) + n \partial_t (u+\overline u)\Big\}.
\end{aligned}
\end{equation}
As $n$ is real valued we have that
\begin{align*}
\partial_t u & = + i c\nab u   + i \frac12 c\nab^{-1} n (u+\overline u),\\
\partial_t \overline u & = - i c \nab \overline u   - i \frac12 c\nab^{-1} n (u+\overline u)
\end{align*}
which implies that
\begin{equation}\label{uu}
\partial_t (u+\overline u) = i c \nab (u-\overline u).
\end{equation}
Plugging  \eqref{uu} into \eqref{F0} yields with the notation $\partial_t n = \dot n$ that
\begin{equation}
\begin{aligned}\label{F1}
& i \partial_t F = - c \nab F   -\frac12 c \nab^{-1} \Big\{ \dot{n} (u+\overline u) +   i n c \nab (u-\overline u) \Big\}.
\end{aligned}
\end{equation}
System \eqref{kgz} together with equation \eqref{0u} and  \eqref{F1}  thus takes the form
\begin{equation}\label{kgzS}
\begin{aligned}
  i \partial_t F  &= - c \nab F   - \frac12 c \nab^{-1} \Big\{ \dot{n} (u+\overline u) +   i  n c \nab (u-\overline u) \Big\},\\
   \partial_{tt}{n}  &= \Delta n + \frac14 \Delta \vert u+ \overline u\vert^2,\\
    u   &= (c \nab)^{-1}\left\{ - i F   - \frac12 c\nab^{-1} n \left( u(0) + \int_0^t F(\xi) \mathrm{d}\xi + \overline{u(0) + \int_0^t  F(\xi) \mathrm{d}\xi}\right)  \right\}.
\end{aligned}
\end{equation}
Thereby, we use that $c \nab$ is invertible for all $c \neq 0$ as well as the representation
\begin{equation}
u(t) = u(0) + \int_0^t F(\xi) \mathrm{d}\xi.
\end{equation}

\section{Construction of the first order scheme}
In this section we develop a first order \emph{uniformly accurate numerical scheme} which allows us to approximate solutions of the KGZ system \eqref{kgz} with order $\mathcal{O}(\tau)$ uniformly in the parameter $c$. Our  approach is thereby based on looking at the reformulated system \eqref{kgzS} and approximating  the corresponding Duhamel's formula in $(F,n,\partial_t n = \dot{n})$. However, and in great difference to classical exponential and trigonometric integration techniques (cf., e.g., \cite{Gau15,HerrS17,HLW,HochOst10}), we will carefully treat the highly oscillatory phases triggered by the plasma frequency $c$ in an exact way.

Duhamel's formula in $(F,n,\dot{n})$ reads (see \eqref{kgzS})
\begin{equation}
\begin{aligned}\label{Duh}
F(t_\ell+\tau) & = \e^{i \tau c \nab}F(t_\ell)  \red+ \frac{i}{2}\black c \nab^{-1} \int_0^\tau  \e^{i (\tau-\xi) c \nab}
\Big\{ \dot{n}(t_\ell+\xi) \big(u(t_\ell+\xi)+\overline u(t_\ell+\xi)\big) \\
 & \qquad \qquad\qquad \qquad \qquad \qquad\qquad +i    n(t_\ell+\xi)  c \nab \big(u(t_\ell+\xi)-\overline u(t_\ell+\xi)\big) \Big\}\dd \xi,\\
n(t_\ell+\tau)  &= \coso n(t_\ell) + \nabo^{-1} \sino  \dot{n}(t_\ell) \\
& + \frac14 \nabo^{-1}\int_0^\tau \sin((\tau-\xi)\nabo) \Delta 
\big\vert u(t_\ell+\xi) + \overline u(t_\ell+\xi) \big\vert^2 \dd \xi,\\
 \dot{n}(t_\ell+\tau)  &= - \nabo \sino n(t_\ell) + \coso  \dot{n}(t_\ell) \\
& + \frac14\int_0^\tau \cos((\tau-\xi)\nabo) \Delta 
\big\vert u(t_\ell+\xi) + \overline u(t_\ell+\xi) \big\vert^2 \dd \xi.
\end{aligned}
\end{equation}
Furthermore, observe that for $u$ we have (see \eqref{kgz})
\begin{equation}\label{DuhU}
u(t_\ell+\tau)  = \e^{i \tau c \nab}u(t_\ell)    \red+ \frac{i}{2}\black  c \nab^{-1} \int_0^\tau  \e^{i (\tau-\xi) c \nab}
n(t_\ell+\xi) \big(u(t_\ell+\xi)+\overline u(t_\ell+\xi)\big) \dd \xi.
\end{equation}
\begin{rem}\label{rem:uT}
Note that $
\partial_t u = F = \mathcal{O}(c\nab).$ 
Thus, if we would approximate the integrals in~\eqref{Duh}  by employing the classical Taylor series expansion
\[
u(t_\ell+\xi) = u(t_\ell) + \mathcal{O}(\xi \partial_t u) = u(t_\ell) + \mathcal{O}(\xi c\nab )
\]
(or a classical quadrature formula) this would yield a local error of order $\mathcal{O}(\tau^2 c^2)$ and, in particular, not the aimed uniform approximation property. Henceforth, standard exponential integrator techniques (cf. \cite{HochOst10}) fail, and a more careful approximation technique has to be applied.
\end{rem}

\subsection{Collection of essential lemma and notation}
We start off by collecting some useful lemma which will be essential  in the derivation of uniform approximations with respect to $c$. Thereby we will in particular exploit the following refined bilinear estimates: For $\sigma_1+\sigma_2\geq 0$  (and as we assume that $1\leq d \leq 3$)  it holds that
\begin{equation*}\label{bil2}
\begin{aligned}
& \Vert f g \Vert_{\sigma} \leq K_{r,d} \Vert f\Vert_{\sigma_1} \Vert g \Vert_{\sigma_2} \quad \text{ for  all } \sigma \leq \sigma_1+\sigma_2-\textstyle\frac{d}{2}\quad \text{ with } \sigma_1,\sigma_2 \text{ and } -\sigma \neq\textstyle \frac{d}{2},\\
&\Vert f g \Vert_{\sigma} \leq K_{r,d} \Vert f\Vert_{\sigma_1} \Vert g \Vert_{\sigma_2} \quad \text{  for  all } \sigma < \sigma_1+\sigma_2-\textstyle\frac{d}{2}\quad \text{ with } \sigma_1,\sigma_2 \text{ or } -\sigma =\textstyle \frac{d}{2}.
\end{aligned}
\end{equation*}
In particular, by setting $\sigma = \sigma_1 = r-1$  and $\sigma_2 = r$ we can thus conclude  that
\begin{equation}\label{bil2}
\Vert f g \Vert_{r-1} \leq K_{r,d} \Vert f \Vert_{r-1} \Vert g \Vert_{r},
\end{equation}
where we use that $\sigma_2 = r >d/2$ as well as $\sigma_1+\sigma_2 = 2r -1 > 0$.

\begin{lem}\label{cnab}For all $t \in \mathbb{R}$ and $c\neq0$ we have that 
\begin{equation}\label{lcnab}
\begin{aligned}
& \left\lVert c\nab^{-1} f\right\rVert_r \leq \left\lVert f \right\rVert_{r},\quad \Vert \mathrm{e}^{i t c\nab} f \Vert_r  = \Vert f \Vert_r, \quad
 \left \Vert (c \nab - c^2 ) f \right \Vert_r \leq \frac12 \Vert f \Vert_{r+2},\\
& \Vert ( \e^{-i \xi c \nab} - \e^{-i \xi c^2})f \Vert_r  \leq \frac12 \xi \Vert f \Vert_{r+2}, \\
&\Vert \nab^{-2} f \Vert_{r} \leq \mathrm{min}\left(\frac{1}{c}\Vert f \Vert_{r-1}, 
\Vert f \Vert_{r-2}, \frac{1}{c^2} \Vert f \Vert_r\right), \quad\Vert \nab^{-2} \left( f c \nab g \right)\Vert_{r-1}\leq K \Vert f \Vert_{r-1} \Vert g \Vert_{r+1}.
\end{aligned}
\end{equation} 
\end{lem}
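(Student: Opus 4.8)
The plan is to reduce every assertion to an elementary estimate on Fourier coefficients, using the identity $\left(\nab\right)_k = \sqrt{|k|^2+c^2}$ from \eqref{def:cnabF} together with the definition of the $H^r$ norm, $\Vert f \Vert_r^2 = \sum_k \langle k \rangle^{2r} |\hat f_k|^2$. The first five bounds are all of the form ``a Fourier multiplier $m(k)$ is pointwise bounded by the appropriate power of $\langle k\rangle$,'' so each one follows once we verify the scalar inequality on the symbol level and apply Plancherel.

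First I would record $\Vert c \nab^{-1} f\Vert_r \leq \Vert f\Vert_r$: the symbol is $c/\sqrt{|k|^2+c^2} \leq 1$. The unitarity $\Vert \e^{itc\nab} f\Vert_r = \Vert f\Vert_r$ is immediate since $|\e^{it\sqrt{|k|^2+c^2}}| = 1$. For $\Vert(c\nab - c^2)f\Vert_r \leq \tfrac12 \Vert f\Vert_{r+2}$ I would write $c\sqrt{|k|^2+c^2} - c^2 = c^2\big(\sqrt{1+|k|^2/c^2}-1\big)$ and use the elementary inequality $\sqrt{1+x}-1 \leq \tfrac12 x$ for $x \geq 0$ with $x = |k|^2/c^2$, giving symbol $\leq \tfrac12 |k|^2 \leq \tfrac12 \langle k\rangle^2$. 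The fourth bound follows from the third via $|\e^{-i\xi a} - \e^{-i\xi b}| \leq \xi|a-b|$ (mean value / fundamental theorem of calculus) applied with $a = c\nab$, $b = c^2$ on each Fourier mode, then invoking the just-proven symbol bound $|c\sqrt{|k|^2+c^2} - c^2| \leq \tfrac12\langle k\rangle^2$. For $\Vert \nab^{-2} f\Vert_r \leq \min(\cdots)$ the symbol is $(|k|^2+c^2)^{-1}$, and one bounds it three ways: $(|k|^2+c^2)^{-1} \leq \tfrac1c (|k|^2+c^2)^{-1/2} \leq \tfrac1c \langle k\rangle^{-1}$ (using $|k|^2 + c^2 \geq c\sqrt{|k|^2+c^2}\geq c\langle k\rangle$... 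I should be careful: $|k|^2+c^2 \geq c^2$ and $|k|^2+c^2 \geq |k|^2$, hence $(|k|^2+c^2)^2 \geq c^2(|k|^2+c^2) \geq c^2\langle k \rangle^2$ using $|k|^2+c^2\geq \max(|k|^2,c^2)\geq \tfrac12\langle k\rangle^2$ only up to a constant — so to get the clean constant $1$ I would instead note $|k|^2+c^2\geq c^2$ directly gives the $\tfrac1{c^2}\Vert f\Vert_r$ bound, $|k|^2+c^2\geq \langle k\rangle^2 = 1+|k|^2$ fails unless $c\geq 1$; most likely the intended reading absorbs $\langle k\rangle$-vs-$|k|$ into the stated form, so I would simply present $(|k|^2+c^2)^{-1}\langle k\rangle^r \leq c^{-1}\langle k\rangle^{r-1}$, $\leq \langle k\rangle^{r-2}$, $\leq c^{-2}\langle k\rangle^r$ each from dropping one of the two summands in the denominator), and take the minimum.

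The only assertion that is not a pure symbol estimate is the last one, $\Vert \nab^{-2}(f\, c\nab g)\Vert_{r-1} \leq K \Vert f\Vert_{r-1}\Vert g\Vert_{r+1}$, and this is where the actual work lies — it is the one place where the product structure interacts with the loss of derivative. Here I would first move the derivative onto $g$: by the preceding min-bound, $\Vert \nab^{-2} h\Vert_{r-1} \leq \Vert h\Vert_{r-2}$, so it suffices to show $\Vert f\, c\nab g\Vert_{r-2}\lesssim \Vert f\Vert_{r-1}\Vert g\Vert_{r+1}$; but $c\nab$ is not bounded on $H^{r+1}\to H^{r}$ uniformly in $c$ (its symbol is $c\sqrt{|k|^2+c^2}\sim c^2$ for bounded $k$), so a crude bound fails and one must keep one factor of $\nab^{-2}$ attached. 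Instead I would apply the refined bilinear estimate \eqref{bil2} in the form $\Vert \nab^{-2}(f\cdot c\nab g)\Vert_{r-1} = \Vert \nab^{-2}(f\cdot c\nab g)\Vert_{r-1}$ and use that $\nab^{-1} c\nab g = cg$ has symbol $c$, which is \emph{not} bounded — so the correct route is: write $\nab^{-2}(f\, c\nab g)$, and estimate by duality against $\phi\in H^{-(r-1)}$, distributing $\nab^{-1}$ onto $\phi$ (symbol $(|k|^2+c^2)^{-1/2}\leq c^{-1}$, i.e. it gains a factor $c^{-1}$ and one derivative) and the remaining $\nab^{-1}c\nab = c\cdot\nab^{-1}\cdot\nab\cdots$ — this is getting circular. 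The clean argument I would actually give: $\nab^{-2} (f c\nab g)$ has the same $H^{r-1}$ norm as $\sum_k \langle k\rangle^{r-1}(|k|^2+c^2)^{-1}\widehat{(f\, c\nab g)}_k$; since $(|k|^2+c^2)^{-1} \leq \langle k\rangle^{-1}\cdot(|k|^2+c^2)^{-1/2}\cdot c\cdot c^{-1}\langle k\rangle \le$ ... — the robust statement is $(|k|^2+c^2)^{-1} \le \langle k\rangle^{-1} \cdot (|k|^2+c^2)^{-1/2}$ fails dimensionally, so finally: use $(|k|^2 + c^2)^{-1} \le \langle\nabla\rangle_c^{-1}$-symbol times $\langle k \rangle^{-1}$ is false; the genuinely correct and intended proof multiplies and divides, $\nab^{-2}(f c\nab g)$, bound $\Vert\cdot\Vert_{r-1}\le \Vert f c\nab g\Vert_{r-3}$? — no. I will instead commit to the following: apply \eqref{bil2} with the two factors $f$ (in $H^{r-1}$) and $\nab^{-1} c \nab^{-1}(c\nab g) $... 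The honest obstacle is precisely arranging the $\nab^{-2}$ and the $c\nab$ so that exactly one net derivative is lost and the $c$-weights cancel; concretely one shows $\big|(|k|^2+c^2)^{-1} \cdot c\sqrt{|j|^2+c^2}\big| \le C\langle k\rangle^{-1}\langle j\rangle^{2}\langle k-j\rangle^{?}$ on the convolution, i.e. that the multiplier symbol is controlled by $\langle j\rangle^2$ (two derivatives on $g$) and loses only one derivative overall, after which \eqref{bil2} with $\sigma = r-1$, $\sigma_1 = r-1$, $\sigma_2 = r+1$ closes the estimate since $\sigma_1+\sigma_2 = 2r > 0$ and $\sigma_2 = r+1 > d/2$. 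I expect this last bilinear-plus-symbol bookkeeping — ensuring the factor of $c$ from $c\nab$ is exactly absorbed by $\nab^{-2}$ while only one derivative migrates to $g$ — to be the main obstacle; everything else is one-line Fourier multiplier bounds.
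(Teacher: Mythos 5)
Your symbol-level arguments for the first five estimates are correct and are essentially what the paper does (it defers the first two rows to \cite{BFS17} and records the elementary pointwise bounds $\frac{1}{c^2+k^2}\le\min\big(\frac{1}{c|k|},\frac{1}{k^2},\frac{1}{c^2}\big)$); your worry about $\langle k\rangle$ versus $|k|$ is harmless, since every later application of the lemma is in the regime $c\ge 1$.

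The last estimate, however, is not actually proved in your proposal: you correctly diagnose that invoking $\Vert\nab^{-2}h\Vert_{r-1}\le\Vert h\Vert_{r-3}$ (losing two derivatives) cannot absorb the factor $c$ in $c\nab$, but you then cycle through several aborted attempts and end with an unverified claim about a convolution-symbol bound. The missing idea is to use the \emph{third} branch of the min-bound rather than the second: $\Vert\nab^{-2}h\Vert_{r-1}\le c^{-2}\Vert h\Vert_{r-1}$, which loses no derivatives but gains the factor $c^{-2}$ needed to tame $c\nab$. Concretely,
\[
\left\Vert \nab^{-2}\left(f\,c\nab g\right)\right\Vert_{r-1}
\le\left\Vert f\,\frac{\nab}{c}\,g\right\Vert_{r-1}
\le K\,\Vert f\Vert_{r-1}\left\Vert\frac{\nab}{c}\,g\right\Vert_{r}
\le K\,\Vert f\Vert_{r-1}\,\Vert g\Vert_{r+1},
\]
where the middle step is exactly the refined bilinear estimate \eqref{bil2} and the last step uses that the symbol of $c^{-1}\nab$ satisfies $\frac{\sqrt{c^2+k^2}}{c}\le 1+c^{-1}|k|\le 1+|k|$ for $c\ge 1$, so that $c^{-1}\nab\colon H^{r+1}\to H^{r}$ is bounded uniformly in $c$. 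The point is that dividing $c\nab$ by $c^2$ produces $\nab/c$, which loses only one derivative and carries no positive power of $c$: the $c$-weight and the derivative loss are resolved simultaneously, with no need for any paraproduct, duality, or frequency-interaction argument. This one-line reduction is precisely the proof given in the paper.
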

\begin{proof} The estimates in the first and second row follow from \cite{BFS17}. Furthermore, observe that
\[
 \frac{1}{c^2+k^2} \leq \mathrm{min}\left(\frac{1}{c \vert k \vert},\frac{1}{k^2},\frac{1}{c^2}\right)
\qquad \text{and} \qquad 
\frac{  \sqrt{c^2+k^2}}{c} \leq \frac{c +  \vert k \vert}{c} \leq  1+ c^{-1}\vert k \vert .
\]
The second inequality together with the bilinear estimate \eqref{bil2} and the definition of $\nab$ in Fourier space (see \eqref{def:cnabF}) in particular implies that
\begin{align*}
\left\Vert \nab^{-2} \left( f c \nab g \right)\right\Vert_{r-1}
&\leq \left \Vert f  \frac{c\nab}{c^2} g \right \Vert_{r-1} \leq K \Vert f  \Vert_{r-1}\left \Vert   \frac{\nab}{c} g\right\Vert_{r}\leq K \Vert f \Vert_{r-1} \Vert g \Vert_{r+1}
\end{align*}
for some constant $K>0$ independent of $c$. This concludes the estimates in the last row of~\eqref{lcnab}.
\end{proof}

In the following we set
\begin{equation}\label{Mr}
M_{T,r} = \mathrm{max}\left(\sup_{0 \leq t \leq T}\left\{\Vert u(t)\Vert_{r+1}+  \left \Vert (c\nab^{-1}) F(t) \right \Vert_{r+1} +  \Vert n(t)\Vert_{r}+  \Vert \dot n(t)\Vert_{r-1}\right\},1\right) 
\end{equation}
and introduce a suitable definition for the occuring remainders.
\begin{defn}[Remainder]\label{def:rem} We will denote all constants which can be chosen independently of $c$ by $K$. Furthermore, we write
\begin{equation}\label{rem}
f = g + \mathcal{R}_{r+s}\qquad \text{if}\qquad
\Vert f- g \Vert_r \leq K M_{T,r+s}^p
\end{equation}
for some $p\in \mathbb{N}$ and $K>0$ independent of $c$.
\end{defn}
We will also make use of the so-called $\varphi-$functions.
\begin{defn}[The $\varphi-$ functions, see \cite{HochOst10}]\label{Def:phifunc}
For $\zeta \in \mathbb{C}$ we set
\begin{equation}\label{def:phifunc}
\begin{aligned}
& \tau \varphi_1( \zeta \tau) := \int_0^\tau \e^{ \zeta \xi}\dd \xi =  \tau\frac{\e^{ \zeta \tau} - 1}{ \zeta \tau }\quad \text{and}\quad  \tau^2 \varphi_2( \zeta \tau) := \int_0^\tau \xi  \varphi_1(\zeta \xi)\dd \xi = \tau^2 \frac{\varphi_1( \zeta \tau)-1}{\zeta\tau}.
\end{aligned}
\end{equation}
Furthermore, we set
\[
\tau^2 \Psi_2(\tau\zeta) := \int_0^\tau \xi \mathrm{e}^{ \xi \zeta}\dd \xi = \tau^2 \frac{\varphi_0(\tau\zeta) - \varphi_1(\tau\zeta)}{\tau\zeta}.
\]
\end{defn}
The following lemma will allow us to carry out a classical Taylor series expansions in $n(t_\ell+\xi)$ and $\dot n(t_\ell+\xi)$ in the construction of our numerical scheme without producing remainders which depend on $c$.
\begin{lem}\label{lem:appn} For all $\xi \in \mathbb{R}$ it holds that
\begin{align}\label{eq:appn}
 & \Vert n(t_\ell+\xi) - n(t_\ell) \Vert_r  + \Vert \dot{n}(t_\ell+\xi) - \dot{n}(t_\ell)\Vert_{r-1}\leq \vert \xi \vert K M_{r+1}^2
\end{align}
for some constant $K>0$ which can be chosen independently of $c$ such that in particular
\begin{equation}\label{Tay:nnp}
n(t_\ell+\xi) = n(t_\ell) +  \xi \mathcal{R}_{r+1},\qquad \dot n(t_\ell+\xi) = \dot n(t_\ell) +  \xi \mathcal{R}_{r+2}.
\end{equation}
\end{lem}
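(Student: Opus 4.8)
\emph{Proof idea.} The plan is to obtain both bounds directly from the fundamental theorem of calculus applied to the wave part of \eqref{kgzS}; since only $n$, $\dot n$ and $u$ (and no highly oscillatory propagators) then enter, the parameter $c$ never appears, so the resulting constants are automatically uniform in $c$.

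For the first difference I would write
\[
n(t_\ell+\xi) - n(t_\ell) = \int_0^\xi \dot n(t_\ell+s)\,\dd s ,
\]
whence $\Vert n(t_\ell+\xi) - n(t_\ell)\Vert_r \le \vert\xi\vert \sup_{0\le t\le T}\Vert \dot n(t)\Vert_r \le \vert\xi\vert\, M_{T,r+1}$, using that $M_{T,r+1}$ controls $\Vert\dot n(t)\Vert_{(r+1)-1}=\Vert\dot n(t)\Vert_r$. For the second difference I would insert the equation $\partial_{tt} n = \Delta n + \tfrac14\Delta\vert u+\overline u\vert^2$ from \eqref{kgzS} to get
\[
\dot n(t_\ell+\xi) - \dot n(t_\ell) = \int_0^\xi \Big( \Delta n(t_\ell+s) + \tfrac14 \Delta\vert u(t_\ell+s)+\overline u(t_\ell+s)\vert^2 \Big)\,\dd s .
\]
Measuring in $H^{r-1}$, the linear term is bounded by $\Vert \Delta n(t_\ell+s)\Vert_{r-1}\le\Vert n(t_\ell+s)\Vert_{r+1}\le M_{T,r+1}$, while for the nonlinear term I would use $\Vert \Delta g\Vert_{r-1}\le\Vert g\Vert_{r+1}$ together with the bilinear estimate \eqref{bil} at regularity $r+1$ (which is admissible since $r+1>d/2$): $\Vert\vert u+\overline u\vert^2\Vert_{r+1}\le K_{r+1,d}\Vert u+\overline u\Vert_{r+1}^2\le 4K_{r+1,d}\Vert u\Vert_{r+1}^2\le 4K_{r+1,d} M_{T,r+1}^2$. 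Collecting terms and using $M_{T,r+1}\ge 1$ then gives $\Vert\dot n(t_\ell+\xi)-\dot n(t_\ell)\Vert_{r-1}\le\vert\xi\vert\, K M_{T,r+1}^2$, which, combined with the first bound, yields \eqref{eq:appn}.

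Finally, \eqref{Tay:nnp} is merely a restatement of \eqref{eq:appn} in the notation of Definition \ref{def:rem}: the $n$-difference is measured in $\Vert\cdot\Vert_r$ and controlled by $M_{T,r+1}$, so $n(t_\ell+\xi)=n(t_\ell)+\xi\,\mathcal{R}_{r+1}$; the $\dot n$-difference is measured in $\Vert\cdot\Vert_{r-1}$ and controlled by $M_{T,r+1}\le M_{T,r+2}$, so, read at base regularity $r-1$, it becomes $\dot n(t_\ell+\xi)=\dot n(t_\ell)+\xi\,\mathcal{R}_{r+2}$. I do not expect a genuine obstacle here; the only delicate point is the bookkeeping of Sobolev indices, namely that $M_{T,r+1}$ rather than $M_{T,r}$ is the relevant quantity, because passing through the wave equation from $n$ to $\ddot n=\Delta n+\tfrac14\Delta\vert u+\overline u\vert^2$ costs two spatial derivatives and the quadratic nonlinearity requires $u$ at regularity $r+1$.
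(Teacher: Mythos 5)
Your argument is correct, and it reaches the stated bounds by a route that is more elementary than, though closely parallel to, the one in the paper. The paper proves this lemma by writing out Duhamel's formula for the wave part, i.e.\ $n(t_\ell+\xi)-n(t_\ell)=(\cos(\xi\nabo)-1)n(t_\ell)+\xi\,\mathrm{sinc}(\xi\nabo)\dot n(t_\ell)+\xi\mathcal{R}_{r+1}$ and the analogous identity for $\dot n$, and then invokes the trigonometric propagator estimates \eqref{appsin1} together with the bilinear estimate \eqref{bil}. You instead apply the fundamental theorem of calculus directly to $n$ and to $\dot n$ (inserting the equation $\partial_{tt}n=\Delta n+\tfrac14\Delta\vert u+\overline u\vert^2$ for the latter), which bypasses the $\cos/\mathrm{sinc}$ expansions entirely and makes the $c$-independence manifest, since no $c$-dependent operator ever appears; the price is that you need the classical (rather than mild) formulation of the wave equation, which is harmless at the regularity assumed here. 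The derivative bookkeeping in both arguments comes out the same: everything is controlled by $M_{T,r+1}$, because one needs $\Vert \dot n\Vert_r$, $\Vert n\Vert_{r+1}$ and $\Vert u\Vert_{r+1}$, exactly as you observe. Your reading of \eqref{Tay:nnp} as a restatement of \eqref{eq:appn} in the remainder notation of Definition \ref{def:rem} (with the $\dot n$-difference measured one Sobolev index lower, whence the index $r+2$) also matches the paper's usage. In short: correct, with a mildly different and arguably cleaner decomposition.
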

\begin{proof}
Duhamel's formula \eqref{Duh} in $(n,\dot{n})$ yields
\begin{align*}
& n(t_\ell+\xi) - n(t_\ell) = \big(\mathrm{cos}(\xi \nabo)-1\big) n(t_\ell) + \xi \mathrm{sinc}(\xi \nabo) \dot{n}(t_\ell) + \xi \mathcal{R}_{r+1},\\
& \dot{n}(t_\ell+\xi) - \dot{n}(t_\ell) = \big(\mathrm{cos}(\xi \nabo)-1\big) \dot{n}(t_\ell)
- \xi \mathrm{sinc}(\xi\nabo)\nabo^2 n(t_\ell) + \xi \mathcal{R}_{r+2}.
\end{align*}
The assertion thus follows by the estimates
\begin{equation}\label{appsin1}
\begin{aligned}
& \Vert (\mathrm{cos}(\xi\nabo) - 1)f\Vert_r+\Vert (\mathrm{sinc}(\xi \nabo) -1 )f\Vert_r\leq 3\xi^2 \Vert f \Vert_{r+2}
\end{aligned}
\end{equation}
together with the bilinear estimate  \eqref{bil}.
\end{proof}
 In the approximation of $u$, however, we need to be much more careful as a classical Taylor series expansion would lead to
\[
u(t_\ell+\xi) = u(t_\ell) + \xi c \nab \mathcal{R}_{r},
\]
and  trigger an error at order $\mathcal{O}(\xi c^2)$ (see also Remark \ref{rem:uT}).
\begin{lem}\label{lem:appu}
For all $\xi \in \mathbb{R}$ it holds that
\begin{equation}\label{eq:appu}
\Vert u(t_\ell+\xi) - \e^{i c^2 \xi} u(t_\ell)\Vert_r + \Vert u(t_\ell+\xi) - \e^{i c\nab \xi} u(t_\ell)\Vert_r \leq \vert \xi \vert K \left(M_{r+2}+ M_r^2\right)
\end{equation}
for some constant $K>0$ which can be chosen independently of $c$ such that in particular
\begin{equation}\label{Tay:u}
\begin{aligned}
&u(t_\ell+\xi) = \e^{ic\nab \xi} u(t_\ell) +  \xi \mathcal{R}_{r}\quad \text{and}\quad u(t_\ell+\xi) = \e^{ic^2\xi} u(t_\ell) +  \xi \mathcal{R}_{r+2}.
\end{aligned}
\end{equation}
\end{lem}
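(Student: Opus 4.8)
The plan is to prove both estimates in \eqref{eq:appu} by writing $u(t_\ell+\xi)$ via its Duhamel formula \eqref{DuhU} and controlling the free flow and the Duhamel integral separately. For the first estimate I would insert $\e^{ic\nab\xi}u(t_\ell)$ as an intermediate term, so that
\[
u(t_\ell+\xi)-\e^{ic^2\xi}u(t_\ell) = \big(\e^{ic\nab\xi}-\e^{ic^2\xi}\big)u(t_\ell) + \Big(u(t_\ell+\xi)-\e^{ic\nab\xi}u(t_\ell)\Big).
\]
The first bracket is bounded by $\tfrac12|\xi|\,\|u(t_\ell)\|_{r+2}\le |\xi|\,K M_{r+2}$ directly from the fourth estimate in Lemma~\ref{cnab}. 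Thus it suffices to bound the second bracket, i.e. to prove the \emph{second} stated inequality $\|u(t_\ell+\xi)-\e^{ic\nab\xi}u(t_\ell)\|_r\le |\xi| K(M_{r+2}+M_r^2)$, and then the first follows by the triangle inequality.

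For the remaining estimate I would use \eqref{DuhU}:
\[
u(t_\ell+\xi)-\e^{ic\nab\xi}u(t_\ell) = \frac{i}{2}\, c\nab^{-1}\int_0^\xi \e^{i(\xi-\eta)c\nab}\, n(t_\ell+\eta)\big(u(t_\ell+\eta)+\overline{u(t_\ell+\eta)}\big)\,\dd\eta.
\]
Taking $\|\cdot\|_r$, the isometry property $\|\e^{i(\xi-\eta)c\nab}g\|_r=\|g\|_r$ and the smoothing bound $\|c\nab^{-1}g\|_r\le\|g\|_r$ from the first row of \eqref{lcnab} remove the two operators at no cost in regularity; this is exactly the point of working with $c\nab^{-1}$ in front of the nonlinearity. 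What remains inside is $\big\| n(t_\ell+\eta)(u(t_\ell+\eta)+\overline{u(t_\ell+\eta)})\big\|_r$, which by the bilinear estimate \eqref{bil} is bounded by $K\|n(t_\ell+\eta)\|_r\|u(t_\ell+\eta)\|_r$. Both factors are controlled by $M_{T,r}$ (indeed $M_{t_\ell,r}$) from the definition \eqref{Mr}, so the integrand is $\le K M_r^2$ and integrating over $\eta\in[0,\xi]$ gives the factor $|\xi|$. Collecting terms yields $\|u(t_\ell+\xi)-\e^{ic\nab\xi}u(t_\ell)\|_r\le |\xi| K M_r^2$, which is even slightly stronger than claimed; combined with the previous paragraph this proves \eqref{eq:appu}, and \eqref{Tay:u} is then just a restatement in the remainder notation of Definition~\ref{def:rem}.

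The conceptual point — and the only place where any care is needed — is precisely the observation highlighted in Remark~\ref{rem:uT}: one must \emph{not} Taylor-expand $u(t_\ell+\xi)=u(t_\ell)+\xi\,\partial_t u+\dots$, since $\partial_t u=F=\mathcal{O}(c\nab)$ would force a loss of two powers of $c$. Instead one expands around the free Klein-Gordon flow $\e^{ic\nab\xi}$, which carries \emph{all} of the fast oscillation exactly, leaving behind only the genuinely slow Duhamel remainder that the $c\nab^{-1}$ prefactor renders harmless. There is no real obstacle here once this substitution is made; the estimate is a direct consequence of Lemma~\ref{cnab}, the bilinear bound \eqref{bil}, and the a priori bound \eqref{Mr}. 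One should double-check that the regularity index on the right-hand side is sharp: the $\e^{ic\nab\xi}$-comparison costs nothing ($M_r^2$), while the $\e^{ic^2\xi}$-comparison costs two derivatives ($M_{r+2}$) because the operators $c\nab$ and $c^2$ differ at order $\|\cdot\|_{r+2}$ — consistent with the two different remainder orders $\mathcal{R}_r$ and $\mathcal{R}_{r+2}$ recorded in \eqref{Tay:u}.
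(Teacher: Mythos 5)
Your argument is correct and is essentially the paper's own proof: the paper likewise reads off $u(t_\ell+\xi)=\e^{i\xi c\nab}u(t_\ell)+\xi\mathcal{R}_r$ directly from Duhamel's formula \eqref{DuhU} (using the isometry of $\e^{i\xi c\nab}$, the bound $\Vert c\nab^{-1}f\Vert_r\le\Vert f\Vert_r$ and the bilinear estimate, which you merely spell out in more detail) and then obtains the $\e^{ic^2\xi}$ comparison by inserting $\e^{ic\nab\xi}u(t_\ell)$ and invoking the phase-difference bound of Lemma~\ref{cnab}. Your observation that the free-flow comparison costs nothing in regularity while the $\e^{ic^2\xi}$ comparison costs two derivatives correctly accounts for the two remainder orders $\mathcal{R}_r$ and $\mathcal{R}_{r+2}$ in \eqref{Tay:u}.
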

\begin{proof}
Duhamel's formula in $u$ (see \eqref{DuhU})  implies that

\[
u(t_\ell+\xi)  =\e^{i \xi c \nab} u(t_\ell) + \xi \mathcal{R}_{r}
\]
which yields the first assertion. Furthermore, we can write
\[
u(t_\ell+\xi) - \e^{ic^2 \xi} u(t_\ell) = \left(\e^{i \xi c \nab}-\e^{i c^2\xi}\right) u(t_\ell) + \xi \mathcal{R}_{r}.
\]
Together with Lemma \ref{cnab}  this concludes the second assertion.
\end{proof}

\subsection{A first order uniformly accurate oscillatory integrator for the KGZ system}

\subsubsection{Approximation in $F$}
Recall Duhamel's formula in $F$  (see \eqref{Duh})
\begin{equation}
\begin{aligned}
F(t_\ell+\tau) & = \e^{i \tau c \nab}F(t_\ell) \\&
  \red+ \frac{i}{2}\black  c \nab^{-1} \int_0^\tau  \e^{i (\tau-\xi) c \nab}
\Big\{ \dot{n}(t_\ell+\xi) \big(u(t_\ell+\xi)+\overline u(t_\ell+\xi)\big) \\
 & \qquad \qquad\qquad \qquad+i    n(t_\ell+\xi)  c \nab \big(u(t_\ell+\xi)-\overline u(t_\ell+\xi)\big) \Big\}\dd \xi.
\end{aligned}
\end{equation}
Multiplying the above formula with the operator $(c\nab)^{-1}$ and employing the expansions for~$(n,\dot{n})(t_\ell+\xi)$ given \eqref{Tay:nnp} and for $u(t_\ell+\xi)$ given in \eqref{Tay:u} we obtain by Lemma \ref{cnab}
that
\begin{equation*}
\begin{aligned}
(c\nab)^{-1} F(t_\ell+\tau) & = \e^{i \tau c \nab}(c\nab)^{-1}  F(t_\ell)
 \\&     \red+ \frac{i}{2}\black  \nab^{-2} \int_0^\tau  \e^{i (\tau-\xi) c \nab}
\Big\{ \dot{n}(t_\ell) \big(\e^{i c^2\xi}u(t_\ell)+\e^{-i c^2\xi}\overline u(t_\ell)\big) \\
 & +i    n(t_\ell)  c \nab \big(\e^{i c^2\xi} u(t_\ell)-\e^{-i c^2\xi}\overline u(t_\ell)\big) \Big\}\dd \xi \\&+ \tau^2 \mathcal{R}_{r+2}\\
& = \e^{i \tau c \nab}(c\nab)^{-1}  F(t_\ell) \\&     \red+ \frac{i}{2}\black  \nab^{-2} \e^{i\tau  c \nab}
 \int_0^\tau  
\Big\{\e^{i \xi \frac12 \Delta  } \big(\dot{n}(t_\ell)  u(t_\ell)\big)+\e^{-i \xi (c\nab+c^2)} \big(\dot{n}(t_\ell)  \overline u(t_\ell)\big)\\
 & +i   \e^{i \xi \frac12 \Delta  } \big(n(t_\ell)  c \nab u(t_\ell)\big)-i\e^{-i \xi (c\nab+c^2)}  \big( n(t_\ell) c \nab  \overline u(t_\ell)\big)\Big\}\dd \xi\\&+ \tau^2 \mathcal{R}_{r+2}.
\end{aligned}
\end{equation*}
With the definition of the $\varphi_1$ function in \eqref{def:phifunc} we furthermore obtain that
\begin{equation}\label{AppF}
\begin{aligned}
(c\nab)^{-1} F(t_\ell+\tau) 
  & = \e^{i \tau c \nab}(c\nab)^{-1}  F(t_\ell)\\&
      \red+ i \black \frac{\tau}{2}  \nab^{-2} \e^{i\tau  c \nab} \varphi_1\left(\textstyle \frac{i}{2} \tau \Delta \right) \Big(\dot{n}(t_\ell)  u(t_\ell) + i n(t_\ell)  c \nab u(t_\ell)\Big)\\
  &     \red+ i \black\frac{\tau}{2}  \nab^{-2} \e^{i\tau  c \nab} \varphi_1\left(-i \tau (c\nab+c^2)\right)\Big(\dot{n}(t_\ell) \overline u(t_\ell)- i n(t_\ell)  c \nab \overline u(t_\ell)\Big)\\& + \tau^2 \mathcal{R}_{r+2}.
\end{aligned}
\end{equation}

\subsubsection{Approximation in $u$}
Recall that at time $t =t_\ell$ we have that (see \eqref{kgzS})
\begin{equation}\label{up}
u(t_\ell) = (c \nab)^{-1}\left\{ - i F(t_\ell)   - \frac12 c\nab^{-1} n(t_\ell) \left( I^F(t_\ell) + \overline{I^F(t_\ell)}\right) \right\},
\end{equation}
where we have set
\[
I^F(t_\ell) := u(0) + \int_0^{t_\ell} F(\xi)\dd\xi =  u(0) + \sum_{k=0}^{\ell-1} \int_0^\tau F(t_k+\xi) \dd \xi .
\]
To obtain an approximation to $I^F(t_\ell)$ we will use the approximation \eqref{AppF} which yields that
\begin{equation}
\begin{aligned}\label{appIF}
I^F(t_\ell) & = u(0) + \sum_{k=0}^{\ell -1} \int_0^\tau \left( \e^{i \xi c \nab} F(t_k) + c^2 \xi\left(\mathcal{R}_{r+2}\right)_k\right) \dd \xi\\
& = u(0) + \left(\tau \sum_{k=0}^{\ell-1} \varphi_1(i \tau c \nab)F(t_k) \right)+ \tau c^2 t_\ell \mathcal{R}_{r+2}.
\end{aligned}
\end{equation}
In the following we set
\begin{equation}\label{defSF}
S^F(t_\ell) = u(0) + \tau \sum_{k=0}^{\ell-1} \varphi_1(i \tau c \nab)F(t_k).
\end{equation}
Then, plugging the approximation \eqref{appIF} into \eqref{up} yields thanks to the estimate $\Vert \nab^{-2} c^2 \Vert_r \leq 1$ (see Lemma \ref{cnab}) that
\begin{equation}\label{Appu}
\begin{aligned}
u(t_\ell) & = (c \nab)^{-1}\left\{ - i F(t_\ell)   - \frac12 c\nab^{-1} n(t_\ell) \left( S^F(t_\ell) + \overline{S^F(t_\ell)}\right) \right\} \\&+ \frac12  \nab^{-2}\left\{ n(t_\ell) \tau c^2   \mathcal{R}_{r+2}\right\}\\
& =  (c \nab)^{-1} \left\{ - i F(t_\ell)   - \frac12 c\nab^{-1} n(t_\ell) \left( S^F(t_\ell) + \overline{S^F(t_\ell)}\right) \right\} \\
& + \tau \mathcal{R}_{r+2}.
\end{aligned}
\end{equation}

\subsubsection{Approximation in $(n,\dot{n})$}
Recall Duhamel's formula in $(n,\dot{n})$ (cf. \eqref{Duh})
\begin{equation*}
\begin{aligned}
n(t_\ell+\tau)  &= \coso n(t_\ell) + \nabo^{-1} \sino \dot{n}(t_\ell) \\
& + \frac14 \nabo^{-1}\int_0^\tau \sin((\tau-\xi)\nabo) \Delta 
\big\vert u(t_\ell+\xi) + \overline u(t_\ell+\xi) \big\vert^2 \dd \xi,\\
\dot{n}(t_\ell+\tau)  &= - \nabo \sino n(t_\ell) + \coso \dot{n}(t_\ell) \\
& + \frac14\int_0^\tau \cos((\tau-\xi)\nabo) \Delta 
\big\vert u(t_\ell+\xi) + \overline u(t_\ell+\xi) \big\vert^2 \dd \xi.
\end{aligned}
\end{equation*}
Employing the approximation of $u(t_\ell+\xi)$ given in \eqref{Tay:u}  together with the trignomeric  approximations
\begin{equation}\label{appsin}
\begin{aligned}
&\Vert (\mathrm{sin}(\xi \nabo) - \xi \nabo )f\Vert_r + \Vert (\mathrm{cos}(\xi\nabo) - 1)f\Vert_r+\Vert (\mathrm{sinc}(\xi \nabo) -1 )f\Vert_r\leq 3\xi^2 \Vert f \Vert_{r+2}
\end{aligned}
\end{equation}
we obtain that
\begin{equation*}
\begin{aligned}
n(t_\ell+\tau)  &= \coso n(t_\ell) + \nabo^{-1} \sino \dot{n}(t_\ell) \\
& + \frac14 \nabo^{-1}\sinco \int_0^\tau \big((\tau-\xi) \nabo \big)\Delta 
\left\{2 \vert u(t_\ell)\vert^2 + \e^{2i c^2\xi} u(t_\ell)^2 + \e^{-2ic^2\xi} \overline{u(t_\ell)}^2
\right\} \dd \xi,\\
& + \tau^3 \mathcal{R}_{r+4},\\
\dot{n}(t_\ell+\tau)  &= - \nabo \sino n(t_\ell) + \coso \dot{n}(t_\ell) \\
& + \frac14 \coso \int_0^\tau \Delta 
\left\{2 \vert u(t_\ell)\vert^2 + \e^{2i c^2\xi} u(t_\ell)^2 + \e^{-2ic^2\xi} \overline{u(t_\ell)}^2
\right\} \dd \xi\\
& + \tau^2 \mathcal{R}_{r+4}.
\end{aligned}
\end{equation*}
Together with the definition of the $\varphi_1$ and $\varphi_2$ function (see \eqref{def:phifunc}) we thus derive that
\begin{equation}\label{Appnnp}
\begin{aligned}
n(t_\ell+\tau)  &= \coso n(t_\ell) + \nabo^{-1} \sino \dot{n}(t_\ell) \\
& + \frac{\tau^2}{4} \sinco \Delta 
\left\{  \vert u(t_\ell)\vert^2 + \varphi_2(2i c^2\tau) u(t_\ell)^2 + \varphi_2(-2ic^2\tau) \overline{u(t_\ell)}^2
\right\}\\
& + \tau^3 \mathcal{R}_{r+4},\\
\dot{n}(t_\ell+\tau)  &= - \nabo \sino n(t_\ell) + \coso \dot{n}(t_\ell) \\
& + \frac{\tau}{4} \coso \Delta 
\left\{2 \vert u(t_\ell)\vert^2 + \varphi_1(2i c^2\tau) u(t_\ell)^2 +  \varphi_1(-2i c^2\tau) \overline{u(t_\ell)}^2
\right\}\\
& + \tau^2 \mathcal{R}_{r+4}.
\end{aligned}
\end{equation}

\subsubsection{A uniformly accurate oscillatory  integrator of first order}
Collecting the approximations in~\eqref{AppF},~\eqref{Appu} (together with \eqref{defSF}) and \eqref{Appnnp} motivate us to define our numerical scheme as follows:

For $0 \leq \ell \leq n-1$ we set
\begin{equation}\label{scheme}
\begin{aligned}
(c\nab)^{-1} F_{\ell+1} & = \e^{i \tau c \nab}(c\nab)^{-1}  F_\ell\\&
       \red+ i \black \frac{\tau}{2}  \nab^{-2} \e^{i\tau  c \nab}  \varphi_1\left(\textstyle \frac{i}{2} \tau \Delta \right) \Big(\dot{n}_\ell  u_\ell + i n_\ell  c \nab u_\ell\Big)\\
  &        \red+ i \black \frac{\tau}{2}  \nab^{-2} \e^{i\tau  c \nab} \varphi_1\left(-i \tau (c\nab+c^2)\right)\Big(\dot{n}_\ell \overline u_\ell -  i n_\ell c \nab \overline u_\ell\Big)\\
  n_{\ell+1}  &= \coso n_\ell+ \nabo^{-1} \sino \dot{n}_\ell \\
& + \frac{\tau^2}{4} \sinco \Delta 
\left\{  \vert u_\ell \vert^2 + \varphi_2(2i c^2\tau) u_\ell ^2 + \varphi_2(-2ic^2\tau) \overline{u_\ell }^2
\right\}\\
\dot{n}_{l+1} &= - \nabo \sino n_\ell + \coso \dot{n}_\ell \\
& + \frac{\tau}{4} \coso \Delta 
\left\{2 \vert u_\ell \vert^2 + \varphi_1(2i c^2\tau) u_\ell ^2 +  \varphi_1(-2i c^2\tau) \overline{u_\ell }^2
\right\}\\
S^F_{\ell+1} & = S^F_\ell + \tau \varphi_1(i \tau c \nab)F_{\ell+1}\\
u_{\ell+1} & = c^{-1} \nab^{-1}\left\{ - i F_{\ell+1} -  \frac12 c\nab^{-1} n_{\ell+1} \left( S^F_{\ell+1} +\overline{S^F_{\ell+1}}\right) \right\}
\end{aligned}
\end{equation}
and choose the initial values (cf. \eqref{0u})
\begin{equation}\label{initial}
\begin{aligned}
& u_0 := u(0), \qquad n_0:=n(0),\qquad \dot{n}_0 := \partial_t n(0),\\
& F_0 := i c \nab u_0      \red+ \frac{i}{2}  \black c\nab^{-1} n_0 (u_0+\overline{u_0}), \\& S_0^F := u_0 +  \tau  \varphi_1(i \tau c \nab)F_{0}.
\end{aligned}
\end{equation}
\begin{rem}
Note that for practical implementation issues we may write
\begin{align*}
S^F_{\ell+1} & = S^F_\ell + \tau \varphi_1(i \tau c \nab)F_{\ell+1}\\
& =  S^F_\ell  - i \left( \e^{i \tau c \nab}-1\right) (c\nab)^{-1} F_{\ell+1}
\end{align*}
thanks to the definition of the $\varphi_1-$function given in  \eqref{def:phifunc}. \Red Note that the calculation of the inverse $(c\nab)^{-1}$ is thereby not necessary as $(c\nab)^{-1}F_{\ell+1}$ itself is computed in the iterative scheme \eqref{scheme}.\black
\end{rem}
In the next section we carry out the convergence analysis of the scheme \eqref{scheme}.
\section{Convergence analysis of the first order scheme}\label{sec:con1}

In the following we set (cf. \eqref{Mr})
\begin{equation}\label{Mn}
B_{t_\ell,r} = \mathrm{max}\left(\sup_{0 \leq k \leq \ell } \left\{\Vert u_k \Vert_{r+1} + \Vert (c\nab)^{-1} F_k\Vert_{r+1}+ \Vert n_k \Vert_{r}+  \Vert \dot{n}_k \Vert_{r-1}\right\},1\right) .
\end{equation}

\subsection{Error in $F$}
Taking the difference of  $F(t_\ell+\tau)$ given in \eqref{AppF}  and $F_{\ell+1}$ defined in~\eqref{scheme} we readily obtain thanks to the error bounds on the remainders \eqref{rem} (see Definition \ref{def:rem}), the bilinear estimate \eqref{bil} and the isometric property $\Vert \e^{i \tau c \nab} f\Vert_r = \Vert f \Vert_r$ (see Lemma \ref{cnab}) that
\begin{equation}\label{errF1}
\begin{aligned}
\big\Vert (c\nab)^{-1}& \left(F(t_\ell+\tau)-F_{\ell+1}\right)\big\Vert_{r+1}\\& \leq  
 \left \Vert (c\nab)^{-1} \left(F\tl-\F\right)\right\Vert_{r+1}\\
&+  \tau \left\Vert  \nab^{-2} (\dot{n}\tl u\tl - \np \u) \right\Vert_{r+1}\\
&+ \tau \left \Vert \nab^{-2}   \varphi_1\left(\textstyle \frac{i}{2} \tau \Delta \right)  \Big (n\tl c\nab u\tl -  n_\ell  c \nab u_\ell \Big) \right\Vert_{r+1}\\
& + \tau \left \Vert \nab^{-2}  \varphi_1\left(-i \tau (c\nab+c^2)\right)  \Big ( n\tl c\nab \overline u\tl - n_\ell  c \nab \overline u_\ell\Big) \right\Vert_{r+1}\\
& + \tau^2 K M_{t_{\ell+1},r+3}^p\\
& =:  \left \Vert (c\nab)^{-1} \left(F\tl-\F\right)\right\Vert_{r+1} + T^F_1 + T^F_2 + T^F_3 + \tau^2 K M_{t_{\ell+1},r+3}^p,
\end{aligned}
\end{equation}
where we have used that the definition of the $\varphi_1$ function (see  \eqref{def:phifunc}) implies that
 \[
\left\Vert   \varphi_1\left(\textstyle \frac{i}{2} \tau \Delta \right) \right\Vert_{r} \leq 1\quad \text{and} \quad \Vert \varphi_1\left(-i \tau (c\nab+ c^2)\right) \Vert_r \leq 1.
\]
We will estimate the terms on the right hand side $T_j^F$ separately.
\subsubsection{Bound on the first term $T^F_1$.} Note that
\begin{align*}
 \dot{n}\tl u\tl - \np \u & = \dot{n} \tl u \tl - (\np - \dot{n} \tl + \dot{n}\tl) \u\\
& = (\dot{n}\tl - \np) \u + \dot{n}\tl (u\tl - \u).
\end{align*}
Thanks to Lemma \ref{cnab} and the bilinear estimate \eqref{bil2} we have that
\begin{equation*}
\Vert \nab^{-2} (f g) \Vert_{r+1} \leq \Vert f g \Vert_{r-1} \leq  K \Vert f \Vert_{r-1} \Vert g \Vert_{r+1}
\end{equation*}
which thus implies that
\begin{equation}\label{b1}
\begin{aligned}
T^F_1 &:= \tau \left\Vert  \nab^{-2} (\dot{n}\tl u\tl - \np \u) \right\Vert_{r+1} \\
&\leq \tau K \left(\Vert \dot{n}\tl - \np\Vert_{r-1} \Vert u_\ell\Vert_{r+1} +\Vert \dot{n}\tl\Vert_{r-1} \Vert u\tl - \u\Vert_{r+1}   \right) 
 \\
& \leq 
\tau K  M_{t_\ell,r} \B \left(\Vert \dot{n}\tl - \np\Vert_{r-1} + \Vert u\tl - \u\Vert_{r+1}   \right) 
\end{aligned}
\end{equation}
where $M_{t_\ell,r}$ and $B_{t_\ell,r}$ are defined in \eqref{Mr} and \eqref{Mn}, respectively.

The second and third term have to be bounded more carefully.
\subsubsection{Bound on the second term $T_2^F$.}  Note that for $\zeta \in \mathbb{R}$ with $\zeta \neq 0$ we have that
\begin{equation}\label{bphi}
 \left\Vert  \tau \varphi_1\left( i \tau \zeta \right) f \right \Vert_{r+1} =  \left
 \Vert  \tau \frac{\e^{i \tau \zeta} - 1}{i \tau \zeta }  f \right\Vert_{r+1} \leq  \Vert (\zeta)^{-1} f \Vert_{r+1}.
\end{equation}
Thanks to the relation
\begin{equation}\label{rnu}
\begin{aligned}
n\tl c \nab u\tl - \n c\nab \u  & = n\tl c \nab u\tl - (\n-n\tl + n \tl) c\nab \u  \\
& = (n\tl - \n) c \nab \u + n\tl c \nab(u\tl - \u)
\end{aligned}
\end{equation}
 we thus obtain that
\begin{equation}\label{b002}
\begin{aligned}
T^F_2& := \tau  \left \Vert  \right. \nab^{-2}   \varphi_1\left(\textstyle \frac{i}{2} \tau \Delta \right) \Big (n\tl c\nab u\tl -  n_\ell  c \nab u_\ell \Big) \left. \right\Vert_{r+1}\\
 & \leq 
 \left \Vert \nab^{-2} \left( \tau \varphi_1\left(\textstyle \frac{i}{2} \tau \Delta \right)\right) \Big ((n\tl - \n)c\nab \u \Big) \right\Vert_{r+1}
\\
&+  \tau \left \Vert \nab^{-2}  \varphi_1\left(\textstyle \frac{i}{2} \tau \Delta \right) \Big (n\tl c\nab (u\tl - \u)\Big) \right\Vert_{r+1}\\
 & \leq
  \left \Vert \nab^{-2}  \Big ((n\tl - \n)c\nab \u \Big) \right\Vert_{r-1}
\\
&+   \tau \left \Vert \nab^{-2} \Big (n\tl c\nab (u\tl - \u)\Big) \right\Vert_{r+1}.
\end{aligned}
\end{equation}
Thanks to the last estimate in  Lemma \ref{cnab} we have that
\begin{align}\label{E1}
 \left \Vert \nab^{-2}  \Big ((n\tl - \n)c\nab \u \Big) \right\Vert_{r-1} \leq K \Vert n\tl - \n\Vert_{r-1} \Vert u_\ell \Vert_{r+1}.
\end{align}
Furthermore, the estimate
$
\frac{ c |k |}{c (c + |k|)} \leq 1
$ 
together with the definition of the operator $\nab$ in Fourier space (see \eqref{def:cnabF}) yields that
\[
\left \Vert \frac{c\nab}{c (c+\nabo)} f\right\Vert_{r} \leq \Vert f \Vert_r.
\]
 Lemma \ref{cnab}  together with the above bound implies
 \begin{equation}
\begin{aligned}\label{E2}
& \tau  \left \Vert \nab^{-2} \Big (n\tl c\nab (u\tl - \u)\Big) \right\Vert_{r+1} \\& \leq \tau \left \Vert \nab^{-2} \Big (n\tl \frac{c\nab}{c (c+\nabo)} c (c+\nabo) (u\tl - \u)\Big) \right\Vert_{r+1}\\
& \leq \tau \left \Vert \nab^{-2} \Big (n\tl \frac{c\nab}{c (c+\nabo)} c^2  (u\tl - \u)\Big) \right\Vert_{r+1}
+ \tau \left \Vert \nab^{-2} \Big (n\tl \frac{c\nab}{c (c+\nabo)} c \nabo  (u\tl - \u)\Big) \right\Vert_{r+1}\\
& \leq \tau \frac{1}{c^2}\left \Vert  n\tl   \frac{c\nab}{c (c+\nabo)} c^2  (u\tl - \u) \right\Vert_{r+1}
+ \tau\frac{1}{c} \left \Vert n\tl \frac{c\nab}{c (c+\nabo)} c \nabo  (u\tl - \u)\right\Vert_{r}\\
& \leq \tau K \Vert n\tl  \Vert_{r+1} \Vert u\tl - \u \Vert_{r+1}.
\end{aligned}
\end{equation}
Plugging \eqref{E1} and \eqref{E2} into \eqref{b002} we can thus conclude that
\begin{equation}\label{b2}
\begin{aligned}
T_2^F & \leq K\Big(  \Vert n\tl - \n\Vert_{r-1} \Vert \u  \Vert_{r+1}
+  \tau  \Vert n\tl \Vert_{r+1}  \left \Vert u\tl - \u \right\Vert_{r+1}\Big)\\
& \leq \tau K M_{t_\ell,r+1} B_{t_\ell,r} 
\Big( \frac{1}{\tau} \Vert n\tl - \n\Vert_{r-1} +  \left \Vert u\tl - \u \right\Vert_{r+1}
\Big).
 \end{aligned}
 \end{equation}
\subsubsection{Bound on the third term $T_3^F$.}  Similarly to the bound on $T_2^F$ we obtain by the  relation~\eqref{rnu}  using \eqref{bphi} together with the estimate
\begin{align*}
\left \Vert \frac{1}{c\nab+c^2} f \right \Vert_r \leq \frac{1}{c^2} \Vert f \Vert_r
\end{align*}
 that
\begin{equation*}\label{b02}
\begin{aligned}
T_3^F &:=  \left \Vert \nab^{-2}  \varphi_1\left(-i \tau (c\nab+c^2)\right)  \Big ( n\tl c\nab \overline u\tl - n_\ell  c \nab \overline u_\ell\Big) \right\Vert_{r+1}\\
&  \leq
  \left \Vert \nab^{-2} \frac{1}{ c \nab + c^2} \Big ((n\tl - \n)c\nab \overline \u \Big) \right\Vert_{r+1}
\\&+   \tau \left \Vert \nab^{-2} \Big (n\tl c\nab \overline{(u\tl - \u)}\Big) \right\Vert_{r+1}\\
& \leq   \left \Vert (n\tl - \n)\frac{c\nab}{c^2} \overline \u  \right\Vert_{r-1}
\\&+   \tau \left \Vert \nab^{-2} \Big (n\tl c\nab \overline{(u\tl - \u)}\Big) \right\Vert_{r+1}.
\end{aligned}
\end{equation*}
With similar arguments as above we can thus conclude 
\begin{equation}\label{b3}
\begin{aligned}
T_3^F & \leq  K\Big(  \Vert n\tl - \n\Vert_{r-1} \Vert \u  \Vert_{r+1}
+   \tau \Vert n\tl \Vert_{r+1}  \left \Vert u\tl - \u \right\Vert_{r+1}\Big)\\
& \leq \tau K M_{t_\ell,r+1} B_{t_\ell,r} 
\Big( \frac{1}{\tau} \Vert n\tl - \n\Vert_{r-1} +  \left \Vert u\tl - \u \right\Vert_{r+1}.
\end{aligned}
\end{equation}

\subsubsection{Bound on error in $F$.} Plugging the bounds \eqref{b1}, \eqref{b2} and \eqref{b3}  into \eqref{errF1} yields that
\begin{equation}\label{errF}
\begin{aligned}
\big\Vert (c\nab)^{-1} &\left(F(t_\ell+\tau)-F_{\ell+1}\right)\big\Vert_{r+1} \\ & \leq \left \Vert (c\nab)^{-1} \left(F\tl-\F\right)\right\Vert_{r+1}\\
&  + \tau K  M_{t_l,r+1} \B \Big(\Vert u\tl - \u\Vert_{r+1}+ \textstyle \frac{1}{\tau} \Vert n\tl - \n \Vert_{r-1}+ \Vert \dot{n}\tl - \np\Vert_{r-1}    \Big)\\
&+ \tau^2 K M_{t_{\ell+1},r+3}^p.
\end{aligned}
\end{equation}

\subsection{Error in $u$} Taking the difference of the approximation of  $u(t_{\ell})$ given in \eqref{Appu} and the numerical solution $u_{\ell}$ defined in \eqref{scheme}  we readily obtain by the definition of the remainder $\mathcal{R}_{r+2}$ (see Definition \ref{def:rem}) together with the relation
\begin{align*}
n\tl S^F\tl - \n \S &= n\tl S^F\tl - (\n-n\tl + n\tl) \S\\
& = (n\tl - \n) \S + n\tl (S^F\tl - \S)
\end{align*}
that
\begin{equation}\label{erru01}
\begin{aligned}
\Vert u(t_{\ell}) - u_{\ell}\Vert_{r+1} &\leq 
\left \Vert c^{-1} \nab^{-1}\left(\F - F\tl \right)\right\Vert_{r+1} 
\\&
 + \left\Vert  \nab^{-2}\left( (n\tl - \n)  S^F_\ell \right)\right\Vert_{r+1} + \left\Vert    \nab^{-2}\left( n\tl  \Big( S^F\tl - S^F_{\ell} \Big)\right)\right\Vert_{r+1} \\
& + \tau \mathcal{R}_{r+3}.
\end{aligned}
\end{equation}
Thanks to Lemma \ref{cnab} we have
\begin{align}\label{E3}
\left\Vert  \nab^{-2}\left( (n\tl - \n)  S^F_\ell \right)\right\Vert_{r+1} 
\leq  K \Vert  (n\tl - \n)  S^F_\ell \Vert_{r-1}
\leq K  \left(\frac{1}{\tau} \Vert n\tl - \n \Vert_{r-1} \right) \left(\tau \Vert S_\ell^F\Vert_{r+1}\right).
\end{align}
Furthermore, the bound \eqref{E2} with $u\tl - \u$ replaced by $(c\nab^{-1})(S^F\tl-S_\ell^F)$ implies that
\begin{equation}
\begin{aligned}\label{E4}
  \left\Vert   \nab^{-2}\left( n\tl  \Big( S^F\tl - S^F_{\ell} \Big)\right)\right\Vert_{r+1}
&  \leq  \left\Vert     \nab^{-2}\left( n\tl   (c \nab) \left[(c\nab)^{-1}\Big( S^F\tl - S^F_{\ell} \Big)\right]\right)\right\Vert_{r+1}
 \\&  \leq 
 K \Vert n\tl\Vert_{r+1} \left \Vert  (c\nab)^{-1} \Big( S^F\tl - S^F_{\ell} \Big)\right\Vert_{r+1}.
\end{aligned}
\end{equation}
Plugging \eqref{E3} and \eqref{E4} into \eqref{erru01} yields that
\begin{equation}\label{erru1}
\begin{aligned}
\Vert u(t_{\ell}) - u_{\ell}\Vert_{r+1} &\leq 
\left \Vert c^{-1} \nab^{-1}\left(\F - F\tl \right)\right\Vert_{r+1} 
\\&
+ K  \left(\frac{1}{\tau} \Vert n\tl - \n \Vert_{r-1} \right) \left(\tau \Vert S_\ell^F\Vert_{r+1}\right) \\
 &+  K \Vert n\tl\Vert_{r+1} \left \Vert  (c\nab)^{-1} \Big( S^F\tl - S^F_{\ell} \Big)\right\Vert_{r+1} \\
& + \tau \mathcal{R}_{r+3}.
\end{aligned}
\end{equation}

Taking the difference of $S^F\tl$ defined in \eqref{defSF} and $S^F_\ell$ given through \eqref{scheme}  we obtain that
\begin{equation}\label{Sappi0}
\begin{aligned}
 \left \Vert  (c\nab)^{-1} \Big( S^F\tl - S^F_{\ell} \Big)\right\Vert_{r+1} & = \tau \sum_{k=0}^\ell  \left \Vert (c\nab)^{-1} \varphi_1(i \tau c \nab)  \Big( F(t_k) - F_k\Big) \right \Vert_{r+1}\\
  &  \leq \tau \sum_{k=0}^\ell  \left \Vert (c\nab)^{-1}  \Big( F(t_k) - F_k\Big) \right \Vert_{r+1},
\end{aligned}
\end{equation}
where we have used that $\left \Vert \varphi_1(i \tau c \nab)\right\Vert_r \leq 1$. 

The definition of $S_\ell^F$ (see \eqref{scheme} with initial value \eqref{initial}) also yields that
\[
\Vert S_\ell^F\Vert_{r+1} \leq  \Vert u(0)\Vert_{r+1}+ \tau \sum_{k=0}^\ell  \left \Vert \varphi_1(i \tau c \nab) F_k \right \Vert_{r+1}.
\]
From the estimate \eqref{bphi} we can furthermore  conclude that
\begin{equation}
\begin{aligned}\label{Sappi}
 \tau \left \Vert  S^F_\ell \right \Vert_{r+1} &\leq \tau \Vert u(0)\Vert_{r+1}+\tau \sum_{k=0}^\ell  \left \Vert  \tau \varphi_1(i \tau c \nab)  F_k\right \Vert_{r+1}\\& 
 \leq \tau \Vert u(0)\Vert_{r+1}+ \tau \sum_{k=0}^\ell  \left \Vert  (c \nab)^{-1}  F_k\right \Vert_{r+1}\\
 & \leq \tau \Vert u(0)\Vert_{r+1}+ t_{\ell+1} \sup_{0 \leq k \leq \ell} \left \Vert  (c \nab)^{-1}  F_k \right \Vert_{r+1}.
\end{aligned}
\end{equation}
Plugging \eqref{Sappi0}  and \eqref{Sappi}  into \eqref{erru1} we thus obtain by the definition of $B_{t_\ell,r}$ in \eqref{Mn} that
\begin{equation}\label{erru}
\begin{aligned}
\Vert u(t_{\ell}) - u_{\ell}\Vert_{r+1} &\leq
\left \Vert c^{-1} \nab^{-1}\left(\F - F\tl \right)\right\Vert_{r+1} 
\\& +K  B_{t_\ell,r} t_\ell \left(\frac{1}{\tau} \Vert n\tl - \n \Vert_{r-1} \right) 
\\& +
K M_{t_\ell,r+1} \left(\tau\sum_{k=0}^{\ell}  \left \Vert c^{-1} \nab^{-1}\left(F(t_k) - F_k \right)\right\Vert_{r+1}\right) .
\end{aligned}
\end{equation}

\subsection{Error in $(n,\dot{n})$}
In the following we define the rotation matrix
\begin{equation}\label{rot}
\mathcal{D}(\tau \nabo) = 
\begin{pmatrix} \coso &  \sino \\ - \sino & \coso\end{pmatrix}.
\end{equation}
Taking the difference of the approximation to the exact solution $(n(t_{\ell+1}), \dot{n}(t_{\ell+1}))$ given in \eqref{Appnnp} and the numerical solution $(n_{\ell+1},\dot{n}_{\ell+1})$ defined in \eqref{scheme}  we readily obtain by the definition of the remainder $\mathcal{R}_{r+2}$ (see Definition \ref{def:rem}), the rotation matrix \eqref{rot} and the relation
\begin{equation}\label{uuu}
\begin{aligned}
& \vert u\tl  \vert - \vert \u^2 \vert 
 = (u\tl - \u)  \overline{u\tl }+ \u \overline{(u\tl - \u)},\\
 & u\tl^2 - \u^2 = (u\tl - \u) (u\tl+\u)
\end{aligned}
\end{equation}
(with the corresponding complex conjugate version) that 
\begin{equation*}
\begin{aligned}
 \begin{pmatrix} 
 n(t_{\ell+1}) - n_{\ell+1} \\\nabo^{-1}( \dot{n}\tl - \np)
 \end{pmatrix}
 &= \mathcal{D}(\tau\nabo) 
  \begin{pmatrix} 
n(t_{\ell}) - n_{\ell}  \\\nabo^{-1}( \dot{n}\tl - \np)
 \end{pmatrix}
\\& +\frac{ \tau}{4} \begin{pmatrix}
 \tau \frac{\sino}{\tau \nabo}  \nabo^2 \Big(p_1(u\tl,u_\ell) (u\tl - u_\ell) \Big)
  \\ \coso   \nabo \Big(p_2(u\tl,u_\ell) (u\tl - u_\ell) \Big)
 \end{pmatrix} 
 + 
 \begin{pmatrix}
 \tau^3  \mathrm{sinc}(\tau\nabo) \mathcal{R}_{r+4} \\  \tau^2 \mathcal{R}_{r+3}
 \end{pmatrix}.
\end{aligned}
\end{equation*}
Thereby,  $p_1$ and $p_2$ simply denote  polynomials in $u\tl, \u$ (according to \eqref{uuu}) due to the bounds  (see \eqref{def:phifunc})
\[
\left \vert \varphi_{1}(\pm 2 i c^2)\right \vert \leq 1, \qquad \left \vert \varphi_2(\pm 2 i c^2) \right \vert \leq 1.
\]

Solving the above recursion we obtain that
\begin{equation*}
\begin{aligned}
&  \begin{pmatrix} 
 n(t_{\ell+1}) - n_{\ell+1} \\\nabo^{-1}( \dot{n}\tl - \np)
 \end{pmatrix}
 \\&= \frac{ \tau}{4}  \sum_{k = 0}^\ell \mathcal{D}(\tau\nabo)^{k} 
 \begin{pmatrix}
 \tau \frac{\sino}{\tau \nabo}  \nabo ^2\Big(p_1(u(t_{n-k},u_{n-k}) (u(t_{n-k})- u_{n-k}) \Big)
  \\ \coso   \nabo \Big(p_2(u(t_{n-k},u_{n-k}) (u(t_{n-k})- u_{n-k})  \Big)
 \end{pmatrix} 
 \\& + \ell\tau
\begin{pmatrix}
 \tau^2\mathrm{sinc}(\tau\nabo)  \mathcal{R}_{r+4} \\\tau \mathcal{R}_{r+3}
 \end{pmatrix}
\\
 &= \frac{ \tau}{4}  \sum_{k = 1}^\ell \mathcal{D}(\tau\nabo)^{k-1} \left\{\mathcal{N}(u(t_{n-k}),u_{n-k}) \right\}
  \\&+ 
\frac{\tau}{4}
 \begin{pmatrix}
 \tau \frac{\sino}{\tau \nabo}  \nabo ^2\Big(p_1(u(t_{n},u_{n}) (u(t_{n})- u_{n}) \Big)
  \\ \coso   \nabo \Big(p_2(u(t_{n},u_{n}) (u(t_{n})- u_{n})  \Big)
 \end{pmatrix} 
 +  t_\ell\begin{pmatrix}
 \tau^2\mathrm{sinc}(\tau\nabo)  \mathcal{R}_{r+4} \\ \tau \mathcal{R}_{r+3}
 \end{pmatrix},
\end{aligned}
\end{equation*}
where we have set
\begin{align*}
\mathcal{N}(u(t_{n-k}),u_{n-k}) &= \begin{pmatrix} \mathcal{N}_1(u(t_{n-k}),u_{n-k}) \\ \mathcal{N}_2(u(t_{n-k}),u_{n-k})\end{pmatrix} \\
& :=
 \mathcal{D}(\tau\nabo) \begin{pmatrix}
 \tau \frac{\sino}{\tau \nabo}  \nabo ^2\Big(p_1(u(t_{n-k},u_{n-k}) (u(t_{n-k})- u_{n-k}) \Big)
  \\ \coso   \nabo \Big(p_2(u(t_{n-k},u_{n-k}) (u(t_{n-k})- u_{n-k})  \Big)
 \end{pmatrix} . 
\end{align*}

Note that for all $k \geq 1$ it holds that
\begin{equation}\label{Db}
\left \Vert \mathcal{D}(\tau \nabo)^{k-1} \right\Vert_r \leq 1 .
\end{equation}
Together with the observation
\begin{align*}
 \mathcal{N}_1(u(t_{n-k}),u_{n-k})
 & =   \coso  \tau \frac{\sino}{\tau \nabo}  \nabo ^2\Big(p_1(u(t_{n-k},u_{n-k}) (u(t_{n-k})- u_{n-k}) \Big)
\\& \quad + \sino \coso   \nabo \Big(p_2(u(t_{n-k},u_{n-k}) (u(t_{n-k})- u_{n-k})  \Big)\\
 & =  \tau   \coso\frac{\sino}{\tau \nabo}  \nabo ^2\Big\{\Big(p_1(u(t_{n-k},u_{n-k}) (u(t_{n-k})- u_{n-k}) \Big)\\
 &\qquad \qquad \qquad \qquad \qquad \qquad \qquad
+  \Big(p_2(u(t_{n-k},u_{n-k}) (u(t_{n-k})- u_{n-k})  \Big)\Big\}\\
\end{align*}
we thus obtain that
\begin{align}\label{NO}
&   \Vert  n(t_{\ell+1}) - n_{\ell+1} \Vert_{r-1} \leq  \tau  K M_{t_\ell,r} B_{t_\ell,r}
\left( \tau \sum_{k =0}^\ell \Vert u(t_k) - u_k\Vert_{r+1}\right)+\tau^2M_{t_{\ell+1},r+3}^p
\end{align}
as well as the (classical) bound
\begin{align*}
 \Vert  n(t_{\ell+1}) - n_{\ell+1} \Vert_{r} + \Vert \nabo^{-1}( \dot{n}\tl - \np)\Vert_{r-1}& \leq
K M_{t_\ell,r} B_{t_\ell,r}
\left( \tau \sum_{k =0}^\ell \Vert u(t_k) - u_k\Vert_{r+1}\right)\\
&+\tau M_{t_{\ell+1},r+3}^p.
\end{align*}
Hence, we can conclude that
\begin{equation}
\begin{aligned}\label{errnnp}
 \frac{1}{\tau}  \Vert  n(t_{\ell+1}) - n_{\ell+1} \Vert_{r-1}+
\Vert  n(t_{\ell+1}) - n_{\ell+1} \Vert_{r} + \Vert \dot{n}\tl - \np\Vert_{r-1} 
\\\leq   K M_{t_\ell,r} B_{t_\ell,r}
\left( \tau \sum_{k =0}^\ell \Vert u(t_k) - u_k\Vert_{r+1}\right)+\tau M_{t_{\ell+1},r+3}^p.
\end{aligned}
\end{equation}
\subsection{The convergence theorem}
The numerical solutions $(\u,\F,\n,\np)$ defined by the oscillatory  integration scheme \eqref{scheme} allows a first-order approximation to the exact solution \newline$(u\tl, F\tl,n\tl,\dot{n}\tl)$ of the Klein-Gordon-Zakharov system \eqref{kgzS} uniformly in $c$. More precisely, with
\[
z_\ell := \frac12 ( u_\ell + \overline{u}_\ell)
\]
(recall the transformation \eqref{z}) the following convergence result holds.
\begin{thm}[Convergence] Fix $r>d/2$. Assume that $(u(0),n(0),\dot{n}(0)) \in H^{r+4} \times H^{r+3} \times H^{r+2}$. Then there exist constants $T>0, \tau_0>0,K>0$ such that for all $t_\ell \leq T, \tau \leq \tau_0$  and all \textcolor{red}{$c\geq1$} we have that
\begin{equation}
\Vert z(t_\ell) - z_\ell \Vert_{r+1} + \Vert n\tl - \n\Vert_{r} + \Vert \dot{n}\tl - \np\Vert_{r-1} \leq K \tau,
\end{equation}
where the constant $K$ depends on T, on $M_{T,r+3}$ defined in \eqref{Mr}, and on $r$, but can be chosen independently of $c$.\label{thm:conv}
\end{thm}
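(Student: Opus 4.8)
The plan is a Lady--Windermere-type error-accumulation argument on the reformulated system \eqref{kgzS}, built entirely on the three one-step error estimates \eqref{errF}, \eqref{erru} and \eqref{errnnp} and on a simultaneous bootstrap that keeps the discrete quantity $B_{t_\ell,r}$ from \eqref{Mn} bounded, so that all constants remain independent of $c$. Concretely, I would introduce the error functional
\[
e_\ell := \Vert (c\nab)^{-1}(F(t_\ell)-F_\ell)\Vert_{r+1} + \Vert u(t_\ell)-u_\ell\Vert_{r+1} + \tfrac{1}{\tau}\Vert n(t_\ell)-n_\ell\Vert_{r-1} + \Vert n(t_\ell)-n_\ell\Vert_{r} + \Vert \dot n(t_\ell)-\dot n_\ell\Vert_{r-1},
\]
observe that by the choice of the starting values \eqref{initial} together with \eqref{0u} one has $F_0=F(0)$, $u_0=u(0)$, $n_0=n(0)$, $\dot n_0=\dot n(0)$, whence $e_0=0$ and $B_{t_0,r}\le M_{T,r}$, and note that since $(u(0),n(0),\dot n(0))\in H^{r+4}\times H^{r+3}\times H^{r+2}$ the quantity $M_{T,r+3}$ of \eqref{Mr} is finite on any fixed interval $[0,T]$ by persistence of regularity for \eqref{eq:kgzOr}. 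The goal is then to prove by induction on $\ell$ the pair of assertions $B_{t_\ell,r}\le 2M_{T,r}$ and $e_\ell\le K_\ast\tau$, with $K_\ast$ depending only on $T$ and $M_{T,r+3}$.

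In the inductive step, the hypothesis $B_{t_k,r}\le 2M_{T,r}$ for $k\le\ell$ turns every prefactor $M_{t_k,r+1}B_{t_k,r}$ appearing in \eqref{errF}--\eqref{errnnp} into a fixed constant $C=C(T,M_{T,r+3})$ and every local remainder into $\tau^2C$, respectively $\tau C$. I would then chain the three estimates: \eqref{erru} controls $\Vert u(t_\ell)-u_\ell\Vert_{r+1}$ by $\Vert(c\nab)^{-1}(F(t_\ell)-F_\ell)\Vert_{r+1}$, by $t_\ell\le T$ times the weighted part $\tfrac{1}{\tau}\Vert n(t_\ell)-n_\ell\Vert_{r-1}$, and by $\tau\sum_{k\le\ell}\Vert(c\nab)^{-1}(F(t_k)-F_k)\Vert_{r+1}\le T\max_{k\le\ell}e_k$; \eqref{errnnp} bounds the whole $(n,\dot n)$-block at step $\ell+1$ --- including its $\tfrac{1}{\tau}$-weighted piece --- by $CT\max_{k\le\ell}e_k+C\tau$; and \eqref{errF} gives the recursion $\Vert(c\nab)^{-1}(F(t_{\ell+1})-F_{\ell+1})\Vert_{r+1}\le\Vert(c\nab)^{-1}(F(t_\ell)-F_\ell)\Vert_{r+1}+C\tau\,e_\ell+C\tau^2$, which after telescoping is $\le CT\max_{k\le\ell}e_k+CT\tau$. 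Adding the three contributions yields, with $E_\ell:=\max_{k\le\ell}e_k$, a closed recursion of the form $E_{\ell+1}\le C_1T\,E_\ell+C_1\tau$; choosing $T$ so small that $C_1T\le\tfrac{1}{2}$ (or, alternatively, invoking the discrete Gronwall lemma) and using $E_0=0$ gives $E_\ell\le 2C_1\tau=:K_\ast\tau$ for all $t_\ell\le T$. The $B$-part of the induction then closes, because $\Vert u_{\ell+1}\Vert_{r+1}+\Vert (c\nab)^{-1}F_{\ell+1}\Vert_{r+1}+\Vert n_{\ell+1}\Vert_{r}+\Vert\dot n_{\ell+1}\Vert_{r-1}\le M_{T,r}+e_{\ell+1}\le M_{T,r}+K_\ast\tau_0\le 2M_{T,r}$ once $\tau_0$ is fixed so that $K_\ast\tau_0\le M_{T,r}$.

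Finally, since $z=\tfrac{1}{2}(u+\overline{u})$ and $z_\ell=\tfrac{1}{2}(u_\ell+\overline{u}_\ell)$ (recall \eqref{z}), the triangle inequality yields $\Vert z(t_\ell)-z_\ell\Vert_{r+1}\le\Vert u(t_\ell)-u_\ell\Vert_{r+1}\le e_\ell\le K\tau$, and likewise $\Vert n(t_\ell)-n_\ell\Vert_{r}+\Vert\dot n(t_\ell)-\dot n_\ell\Vert_{r-1}\le e_\ell\le K\tau$ with $K=K_\ast$, which is the claim. I expect the main obstacle to be the mutually referential structure of the recursion together with the $\tfrac{1}{\tau}$-weighted $n$-error: one must verify that the loss of derivative carried by the $c\nab$-coupling has truly been absorbed into the weighted norms of \eqref{errF}--\eqref{errnnp}, so that no negative power of $\tau$ (and no factor of $c$) survives when the $(n,\dot n)$-block is fed back into the $F$- and $u$-equations, and that the surviving cross-terms enter only through a harmless factor $\tau$ or $t_\ell\le T$ --- which is exactly what forces the smallness restriction on $T$ in addition to the one on the step size $\tau_0$.
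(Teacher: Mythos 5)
Your proposal is correct and follows essentially the same route as the paper's proof: both rest on the a priori finiteness of $M_{T,r+3}$ from local wellposedness, the combination of the three error recursions \eqref{errF}, \eqref{erru}, \eqref{errnnp} under a bootstrap assumption keeping $B_{t_\ell,r}$ bounded, and a Lady Windermere's fan / discrete Gronwall closure, with the exact starting values \eqref{initial} giving zero initial error. The only cosmetic difference is that you close the coupled recursion by shrinking $T$ so that $C_1T\le\frac12$, whereas the paper keeps the $(1+\widetilde{\mathcal{K}}_1\tau)$ growth factor and applies the standard fan argument; both are admissible for the statement as formulated.
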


\begin{proof} Due to the local wellposedness of the Klein-Gordon-Zakharov system \eqref{kgzS}  (see, e.g., \cite{LWP1a,LWP1b,LWP2}) we know that there exists a $T>0$ such that  $M_{T,r+3}$ defined in \eqref{Mr} is finite. Thereby, observe that by the definition of $F = \partial_t u$ we have (see \eqref{kgz})
\[
i F = - c \nab u  - \frac12 c \nab^{-1} n (u+\overline u)
\]
such that by Lemma \ref{cnab} and the bilinear estimate \eqref{bil2} we obtain
\[
\Vert (c \nab)^{-1} F \Vert_{r+1}\leq \Vert u \Vert_{r+1} +  \Vert n u \Vert_{r-1} \leq \Vert u \Vert_{r+1} + K\Vert n \Vert_{r-1} \Vert u \Vert_{r+1}.
\]

Collecting the error bounds \eqref{errF}, \eqref{erru} and \eqref{errnnp} yields that
\begin{equation}\label{errS}
\begin{aligned}
& \big\Vert (c\nab)^{-1} \left(F(t_\ell+\tau)-F_{\ell+1}\right)\big\Vert_{r+1} \\&\hskip2cm\leq \left \Vert (c\nab)^{-1} \left(F\tl-\F\right)\right\Vert_{r+1}\\
&  \hskip2cm+ \tau K  M_{t_l,r+1} \B \Big(\Vert u\tl - \u\Vert_{r+1}+ \textstyle \frac{1}{\tau} \Vert n\tl - \n \Vert_{r-1}+ \Vert \dot{n}\tl - \np\Vert_{r-1}    \Big)\\
&\hskip2cm + \tau^2 K M_{t_{\ell+1},r+3}^p,\\
& \Vert u(t_{\ell+1}) - u_{\ell+1}\Vert_{r+1}\\&\hskip2cm \leq
\left \Vert c^{-1} \nab^{-1}\left(F(t_{\ell+1}) - F_{\ell+1}\right)\right\Vert_{r+1} 
\\& \hskip2cm+K  B_{t_\ell,r} t_\ell \left(\frac{1}{\tau} \Vert n(t_{\ell+1}) - n_{\ell+1} \Vert_{r-1} \right) 
\\& \hskip2cm+
K M_{t_\ell,r+1} \left(\tau\sum_{k=0}^{\ell+1}  \left \Vert c^{-1} \nab^{-1}\left(F(t_k) - F_{k} \right)\right\Vert_{r+1}\right), \\
 &\frac{1}{\tau}  \Vert  n(t_{\ell+1}) - n_{\ell+1} \Vert_{r-1}+
\Vert  n(t_{\ell+1}) - n_{\ell+1} \Vert_{r} + \Vert \dot{n}(t_{\ell+1})- \dot{n}_{\ell+1}\Vert_{r-1} 
\\&\hskip2cm \leq   K M_{t_\ell,r} B_{t_\ell,r}
\left( \tau \sum_{k =0}^\ell \Vert u(t_k) - u_k\Vert_{r+1}\right)+\tau M_{t_{\ell+1},r+3}^p.
\end{aligned}
\end{equation}
In the following we assume that 
\[
\text{for all } k \leq \ell \quad : \quad B_{t_\ell,r} \leq M_1,\quad  M_{t_{\ell+1},r+1} \leq M_2, \quad
M_{t_{\ell+1},r+3} \leq M_3.
\]
Plugging the estimates on the error in $n$ and $\dot{n}$ into the error recursions in $u$ and $F$ yields together with
\[
\tau \sum_{k=0}^{\ell} \Vert f_k \Vert_r \leq  t_{\ell+1} \sup_{0 \leq k \leq \ell} \Vert f_k \Vert_r 
\]
by \eqref{errS}  that
\begin{equation}\label{rF}
\begin{aligned}
 \big\Vert (c\nab)^{-1} \left(F(t_\ell+\tau)-F_{\ell+1}\right)\big\Vert_{r+1} & \leq \left \Vert (c\nab)^{-1} \left(F\tl-\F\right)\right\Vert_{r+1}\\
& + \tau t_\ell \mathcal{K}_1(M_1,M_2,M_3) \sup_{0 \leq k \leq \ell}  \Vert u\tl - \u\Vert_{r+1}\\
& +  \tau^2 K M_{t_{\ell+1},r+3}^p
\end{aligned}
\end{equation}
as well as
\begin{equation}\label{ru}
\begin{aligned}
 \Vert u(t_{\ell+1}) - u_{\ell+1}\Vert_{r+1}&
  \leq t_\ell  \mathcal{K}_2(M_1,M_2,M_3) \sup_{ 0 \leq k \leq \ell} 
\left \Vert c^{-1} \nab^{-1}\left(F(t_{k+1}) - F_{k+1}\right)\right\Vert_{r+1} 
\\&+ \tau K B_{t_\ell,r} t_\ell  M_{t_{\ell+1},r+3}^p,
\end{aligned}
\end{equation}
where the constants $\mathcal{K}_1$ and $\mathcal{K}_2$depend on $t_\ell, M_1,M_2$ and $M_3$, but can be chosen independently of $c$.

Plugging \eqref{ru} into \eqref{rF} finally yields with the inductive assumption that the error in $F$ is growing that 
\begin{equation}\label{rF}
\begin{aligned}
 \big\Vert (c\nab)^{-1} \left(F(t_\ell+\tau)-F_{\ell+1}\right)\big\Vert_{r+1} &
 \leq \left(1+  \widetilde{\mathcal{K}}_1(t_\ell,M_1,M_2,M_3)  \tau \right) \left \Vert (c\nab)^{-1} \left(F\tl-\F\right)\right\Vert_{r+1} \\
 & + \tau^2 \widetilde{\mathcal{K}}_2(t_\ell,M_1,M_2,M_3) ,
 \end{aligned}
\end{equation}
where $\widetilde{\mathcal{K}}_1$ and $\widetilde{\mathcal{K}}_2$ depend on $t_\ell, M_1,M_2$ and $M_3$, but can be chosen independently of $c$.

Collecting the estimates in \eqref{rF}, \eqref{ru} and \eqref{errnnp} the assertion then follows by  the transformation \eqref{z} together with an  inductive, respectively,  \emph{Lady Windermere's fan} argument (see, for example~\cite{HNW93,Lubich08}).
\end{proof}
The uniform convergence rate in $c$ stated in Theorem \ref{thm:conv} is numerically confirmed in Figure \ref{fig1}.

\subsection{Asymptotic convergence}\label{AC} The oscillatory  integrator \eqref{scheme} is \emph{asymptotic preserving}  in the sense that  it converges asymptotically (i.e., for $c \to \infty$)  to the solution of the corresponding Zakharov limit system \eqref{zak} (for sufficiently smooth solutions). 

\begin{rem}[The Zakharov limit] \label{Z1}
Note that exact solutions $(z,n)$ of the Klein-Gordon-Zakharov system \eqref{eq:kgzOr} converge asymptotically  to the Zakharov system \eqref{zak} in the following sense: For sufficiently smooth solutions we have that (see, e.g., \cite{Berg96,LWP1a,LWP1b,Texier07})
\begin{equation}
\begin{aligned}\label{appc}
& z(t,x) = \frac12\left( \e^{i c^2 t} u^\infty(t,x) + \e^{-ic^2t} \overline{u}^\infty(t,x)\right) + c^{-2} \mathcal{R}_{r+4},\\
& n(t,x) = n^\infty(t,x) + c^{-2} \mathcal{R}_{r+5},
\end{aligned}
\end{equation}
where $(u^\infty,n^\infty)$ solve the Zakharov system (cf. \eqref{zak})
\begin{equation}\label{zaki}
\begin{aligned}
& 2 i \partial_t u^\infty  = \Delta u^\infty -  n^\infty u^\infty, \\
& \partial_{tt} n^\infty - \Delta n^\infty =\textstyle \frac12 \Delta \left \vert u^\infty\right\vert^2
\end{aligned}
\end{equation}
equipped with the initial values
\[
u^\infty(0) =z(0) - i c^{-2} \partial_t z(0) ,\quad n^\infty(0) = n(0) \quad \text{and}\quad \dot{n}^\infty(0)= \dot{n}(0).
\]

Formally, the Zakharov limit system \eqref{zaki} can be derived by introducing the \emph{twisted variable} (cf. e.g., \cite{LWP1a,LWP1b})
\begin{equation}\label{twist}
u_\ast(t) := \e^{-ic^2 t} u(t).
\end{equation}
The product rule together with equation \eqref{kgz} yields the following equation in $u_\ast$
\begin{equation}\label{uast}
\begin{aligned}
i \partial_t \ua &= c^2\ua + \e^{-ic^2 t} \partial_t u \\
& =  - (c\nab-c^2) \ua - \frac12 c\nab^{-1} n \left( \ua + \e^{-2ic^2t} \overline{u^\ast}\right).
\end{aligned}
\end{equation}
Together with the approximations (cf. Lemma \ref{cnab})
\[
c\nab - c^2 \rightarrow -\textstyle\frac12 \Delta+\mathcal{O}\left( c^{-2} \Delta^2\right) , \qquad c\nab^{-1} \rightarrow 1 + \mathcal{O}\left( c^{-2} \Delta\right)
\]
we formally obtain that
\begin{equation*}\label{uast}
\begin{aligned}
2 i \partial_t \ua  = \Delta \ua -  n\ua -  \e^{-2ic^2t} n \overline{u^\ast} + c^{-2} \mathcal{R}_{r+4}.
\end{aligned}
\end{equation*}
The equation in $n$ in terms of $\ua$ is given by (see \eqref{kgz})
\begin{equation*}
\partial_{tt} n - \Delta n = \frac14\Delta  \left(2
\vert \ua \vert^2 + \e^{2ic^2t}(\ua)^2 + \e^{-2ic^2t}(\overline{u^\ast})^2
\right ).
\end{equation*}
For a smooth function $f$ we furthermore have (by integration by parts) that
\begin{equation*}
\begin{aligned}
\int_0^t \e^{\pm 2 ic^2 \xi } f\left(\ua(\xi),n(\xi)\right) \dd \xi & =  \frac{1}{\pm 2 ic^2} 
\left( \e^{\pm 2 ic^2 t} f\left(\ua(t),n(t)\right) -  f\left(\ua(0),n(0)\right) \right)\\&
 + \frac{1}{\mp 2 ic^2} \int_0^t \e^{\pm 2 ic^2 \xi } \partial_\xi f\left(\ua(\xi),n(\xi)\right) \dd \xi.
\end{aligned}
\end{equation*}
 We can thus conclude the Zakharov limit system \eqref{zaki} (for sufficiently smooth solutions) thanks to the uniform boundedness of $\partial_t \ua$ and $\partial_t n$:
\[
\Vert \partial_\xi \ua(\xi)  \Vert_{r+1} + \Vert \partial_\xi n(\xi)\Vert_r \leq K M_{T,r+2}
\]
which hold thanks to Lemma \ref{cnab} and Lemma \ref{lem:appn} for some constant $K>0$ independent of $c$ and $M_{T,r}$ defined in \eqref{Mr}. 
\end{rem}
Theorem \ref{thm:conv} together with the approximation in \eqref{appc} implies that the  oscillatory  integrator \eqref{scheme} converges at order $\tau + c^{-2}$ towards the limit solutions of the Zakharov system. More precisely, for sufficiently smooth solutions the scheme \eqref{scheme} allows an approximation towards the solutions $(u^\infty,n^\infty)$ of the Zakharov limit system \eqref{zaki} with the convergence rate
\begin{equation*}
\Vert u^\infty (t_\ell) - \e^{-ic^2 t_\ell} \u \Vert_{r+1} + \Vert n^\infty \tl - \n\Vert_{r} + \Vert \dot{n}^\infty \tl - \np\Vert_{r-1} \leq K \left(\tau + c^{-2}\right)
\end{equation*}
for some constant $K>0$ which is independent of $\tau$ and $c$. With the  transformation \eqref{z} at hand we can in particular deduce for
\begin{align*}
  z^\infty := \frac12\left( \e^{i c^2 t} u^\infty(t,x) + \e^{-ic^2t} \overline{u}^\infty(t,x)\right)\quad \text{and}\quad z_\ell := \frac12( u_\ell + \overline u_\ell)
\end{align*}
that
\begin{equation}\label{b:asymp}
\left\Vert  z^\infty\tl - z_\ell \right\Vert_{r+1} + \Vert n^\infty \tl - \n\Vert_{r} + \Vert \dot{n}^\infty \tl - \np\Vert_{r-1} \leq K \left(\tau + c^{-2}\right).
\end{equation}

The asymptotic convergence \eqref{b:asymp} of our scheme \eqref{scheme} towards the solutions of the Zakharov limit system \eqref{zaki} is numerically confirmed in Figure \ref{fig3}.

\section{A second order uniformly accurate oscillatory integrator for KGZ}

Our novel technique allows us to develop  higher-order uniformly accurate scheme for the Klein-Gordon-Zakharov system \eqref{eq:kgzOr} which confirm  uniformly in $c$. In this section we develop  a \emph{second-order uniformly accurate integrator} which allows an error at order $\mathcal{O}(\tau^2$) uniformly in $c$. The construction of the second order scheme is based on iterating Duhamel's formula  \eqref{Duh}  in $(F,n, \dot{n})$  and integrating the occuring highly oscillatory phases $
\e^{i k c^2 t} $ $(k \in \mathbb{Z})$ and their interactions exactly. 
\subsection{Collection of essential lemma  and notation}
\begin{defn}\label{def:Ac}
In the following we define the operator
\[
\Ac = c \nab - c^2.
\]
\end{defn}

\begin{lem}\label{lem:2} Locally second-order uniform approximations to $(u,n,\dot{n})(t_\ell+\xi)$ are given by
\begin{align*}
u(t_\ell+\xi) & =  \e^{i \xi c \nab} u(t_\ell)    \red+ i \black\xi \frac12 c \nab^{-1}  \e^{i c^2 \xi }
 \Big\{ n(t_\ell) u(t_\ell) +\varphi_1\left(-i \xi (c\nab+ c^2)\right) \big(n(t_\ell)\overline u(t_\ell)\big)\Big\}\\
 &  + \xi^2 \mathcal{R}_{r+4},\\
n(t_\ell+\xi)  &= n(t_\ell) + \xi \dot{n}(t_\ell) 
 + \xi^2 \mathcal{R}_{r+4},\\
\dot{n}(t_\ell+\xi)  &= - \xi \nabo^2 n(t_\ell) +\dot{n}(t_\ell)  + \frac{\xi}{4} \Delta 
\left\{2 \vert u(t_\ell)\vert^2 + \varphi_1(2i c^2\xi) u(t_\ell)^2 +  \varphi_1(-2i c^2\xi) \overline{u(t_\ell)}^2
\right\} + \xi^2 \mathcal{R}_{r+4}.
\end{align*}
\end{lem}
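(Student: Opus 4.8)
\noindent\emph{Strategy.} The plan is to prove the three expansions separately, each time starting from the exact representation (Duhamel's formula \eqref{Duh} for $(n,\dot n)$, formula \eqref{DuhU} for $u$, with $\tau$ replaced by $\xi$ and the inner variable renamed $s$) and then replacing every operator and every evaluation of the solution at time $t_\ell+s$ by its leading term, while keeping the \emph{scalar} oscillatory phases $\e^{\pm ikc^2 s}$ exact. The whole content is the bookkeeping that each discarded contribution carries a factor $\xi^2$ and a constant depending only on $M_{T,r+4}$, i.e.\ is an $\xi^2\mathcal{R}_{r+4}$ in the sense of Definition \ref{def:rem} and in particular uniform in $c$. (For $|\xi|\ge1$ the cruder bounds below are automatically $\le\xi^2(\cdots)$, so one may assume $|\xi|$ bounded.)

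\noindent\emph{The wave part.} For $n$ a plain Taylor expansion suffices: $n(t_\ell+\xi)=n(t_\ell)+\xi\dot n(t_\ell)+\int_0^\xi(\xi-s)\ddot n(t_\ell+s)\,\dd s$ with $\ddot n=\Delta n+\tfrac14\Delta|u+\overline u|^2$ from \eqref{kgz}, and by \eqref{bil} the remainder is $\le\xi^2 KM_{T,r+3}^2=\xi^2\mathcal{R}_{r+3}$. For $\dot n$ I would use the third line of \eqref{Duh}: writing $-\nabo\sin(\xi\nabo)=-\xi\nabo^2\,\mathrm{sinc}(\xi\nabo)$, estimate \eqref{appsin1} gives $-\nabo\sin(\xi\nabo)n(t_\ell)=-\xi\nabo^2 n(t_\ell)+\xi^2\mathcal{R}_{r+4}$ (this single term is what forces the index $r+4$, due to the extra $\nabo^2$) and $\cos(\xi\nabo)\dot n(t_\ell)=\dot n(t_\ell)+\xi^2\mathcal{R}_{r+3}$; in the Duhamel integral I would replace $u(t_\ell+s)$ by $\e^{ic^2 s}u(t_\ell)$ via \eqref{Tay:u} in $H^{r+2}$, expand $\big(\e^{ic^2 s}u(t_\ell)+\e^{-ic^2 s}\overline{u(t_\ell)}\big)^2=2|u(t_\ell)|^2+\e^{2ic^2 s}u(t_\ell)^2+\e^{-2ic^2 s}\overline{u(t_\ell)}^2$, replace $\cos((\xi-s)\nabo)$ by $1$ via \eqref{appsin}, and use $\int_0^\xi\e^{\pm2ic^2 s}\,\dd s=\xi\varphi_1(\pm2ic^2\xi)$ from Definition \ref{Def:phifunc}; the remaining pieces are $\xi^2\mathcal{R}_{r+4}$.

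\noindent\emph{The Klein--Gordon part (the delicate step).} Starting from \eqref{DuhU}, I would first replace $n(t_\ell+s)$ by $n(t_\ell)$ (Lemma \ref{lem:appn}) and $u(t_\ell+s)$ by $\e^{ic^2 s}u(t_\ell)$ in both slots (Lemma \ref{lem:appu} in $H^{r+2}$); by $\|c\nab^{-1}f\|_r\le\|f\|_r$, the estimate \eqref{bil} and $\int_0^\xi s\,\dd s=\xi^2/2$ these cost $\xi^2\mathcal{R}_{r+1}$, $\xi^2\mathcal{R}_{r+2}$. Pulling the scalar phase out of $\e^{i(\xi-s)c\nab}$ via $\e^{\pm ic^2 s}\e^{i(\xi-s)c\nab}=\e^{i\xi c\nab}\e^{-is(c\nab\mp c^2)}$ and integrating with $\int_0^\xi\e^{-is(c\nab\mp c^2)}\,\dd s=\xi\varphi_1(-i\xi(c\nab\mp c^2))$ (Definition \ref{Def:phifunc}), the integral in \eqref{DuhU} becomes
\[
\e^{i\xi c\nab}\,\xi\Big[\varphi_1(-i\xi\Ac)\big(n(t_\ell)u(t_\ell)\big)+\varphi_1\big(-i\xi(c\nab+c^2)\big)\big(n(t_\ell)\overline{u(t_\ell)}\big)\Big].
\]
For the \emph{resonant} first term, the elementary identity $\e^{iw}\varphi_1(-iw)=\varphi_1(iw)$ rewrites $\e^{i\xi c\nab}\varphi_1(-i\xi\Ac)=\e^{ic^2\xi}\varphi_1(i\xi\Ac)$, and since $\|\Ac f\|_r\le\tfrac12\|f\|_{r+2}$ (Lemma \ref{cnab}) and $|\varphi_1(iy)-1|\le\tfrac12|y|$, one may drop $\varphi_1(i\xi\Ac)$ in favour of the identity, paying $\|(\varphi_1(i\xi\Ac)-1)g\|_r\le\tfrac\xi4\|g\|_{r+2}$ and hence, after the prefactor $\tfrac{i\xi}2 c\nab^{-1}$ and \eqref{bil}, an $\xi^2\mathcal{R}_{r+2}$. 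For the \emph{non-resonant} second term one must \emph{not} expand $\varphi_1(-i\xi(c\nab+c^2))$ --- that would cost a factor $c\nab+c^2\sim2c^2$ --- but only replace $\e^{i\xi c\nab}$ by $\e^{ic^2\xi}$, which by $\|(\e^{i\xi c\nab}-\e^{ic^2\xi})f\|_r\le\tfrac\xi2\|f\|_{r+2}$ (Lemma \ref{cnab}), $\|\varphi_1(-i\xi(c\nab+c^2))f\|_{r+2}\le\|f\|_{r+2}$ and \eqref{bil} costs only $\xi^2\mathcal{R}_{r+2}$. Collecting the two terms, multiplying by $\tfrac{i\xi}2 c\nab^{-1}\e^{ic^2\xi}$ and adding back $\e^{i\xi c\nab}u(t_\ell)$ gives exactly the claimed expansion, the remainder $\xi^2\mathcal{R}_{r+2}$ being a fortiori an $\xi^2\mathcal{R}_{r+4}$.

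\noindent\emph{Main obstacle.} The crux is this last expansion of $u$: one has to split the two quadratic interactions according to whether the exponential operator fuses with the scalar phase into the \emph{resonant} multiplier $\Ac=c\nab-c^2$, which is of size $O(1)$ by Lemma \ref{cnab} and may therefore be Taylor-expanded down to the identity, or into the \emph{non-resonant} multiplier $c\nab+c^2\sim2c^2$, whose $\varphi_1$ must be kept exact so as not to lose a power of $c$ --- this is precisely the resonant coupling of the $c$-oscillations highlighted in the introduction. Everything else reduces to the routine trigonometric and Taylor expansions already used for the first-order scheme, the only care needed being to check that no remainder ever requires more than $r+4$ derivatives.
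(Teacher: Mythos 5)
Your proof is correct and follows essentially the same route as the paper: Duhamel's formula with $n(t_\ell+s)$ frozen and $u(t_\ell+s)$ replaced by $\e^{ic^2 s}u(t_\ell)$, exact integration of the phases into $\varphi_1$-functions, and the crucial resonant/non-resonant distinction (Taylor-expand the $\mathcal{O}(1)$ multiplier attached to $\Ac=c\nab-c^2$, keep $\varphi_1(-i\xi(c\nab+c^2))$ exact), with the $(n,\dot n)$ expansions read off from \eqref{Appnnp} after replacing $\tau$ by $\xi$. The only cosmetic difference is that the paper passes through the intermediate approximation $\e^{i\zeta(c^2-c\nab)}=\e^{i\zeta\frac12\Delta}+\zeta\mathcal{R}_{r+4}$ and then expands $\varphi_1(i\xi\frac12\Delta)=1+\mathcal{O}(\xi\Delta)$, whereas you use the exact identity $\e^{i\xi c\nab}\varphi_1(-i\xi\Ac)=\e^{ic^2\xi}\varphi_1(i\xi\Ac)$ before expanding $\varphi_1(i\xi\Ac)=1+\mathcal{O}(\xi\Ac)$; both yield the same remainder.
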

\begin{proof}
Duhamel's formula in $u$ (see \eqref{DuhU}) together with the approximations (see Lemma \ref{cnab})
\[
u(t_\ell+\zeta) = \e^{i c^2 \zeta} u(t_\ell) + \zeta \mathcal{R}_{r+2},\qquad \e^{i \zeta (c^2 - c\nab)} = \e^{ i \zeta \frac12 \Delta} + \zeta \mathcal{R}_{r+4}
\]
 implies by  the definition of the $\varphi_1-$function (see \eqref{def:phifunc}) that
\begin{equation}
\begin{aligned}
u(t_\ell+\xi)  
 & =  \e^{i \xi c \nab}u(t_\ell)   \red+ \frac{i}{2} \black \xi  c \nab^{-1}  \e^{i \xi  c \nab}
 \Big(\textstyle \varphi_1\left( i \xi  \frac12\Delta \right) \big( n(t_\ell) u(t_\ell)\big)\\&+\varphi_1\left(-i \xi (c\nab+ c^2)\right) \big(n(t_\ell)\overline u(t_\ell)\big)\Big) + \xi^2 \mathcal{R}_{r+4}.
\end{aligned}
\end{equation}
Using the estimate in \eqref{appsin1} and the expansion $\varphi_1(i \xi \frac12 \Delta) = 1 + \mathcal{O}(\xi \Delta)$ implies  the assertion together with \eqref{Appnnp} by replacing $\tau $ with $\xi$ in the formulas for  $(n,\dot{n})$.
\end{proof}

\subsection{Construction of the second order scheme}

\subsubsection{Approximation in F}
Lemma \ref{lem:2}  together with the expansion $\varphi_1\left(-i \xi (c\nab+ c^2)\right) = 1 + \mathcal{O}(\xi c^2)$ implies (as $n(t)$ is real valued) that
 \begin{equation}\label{uF}
\begin{aligned}
 u(t_\ell+\xi) + \overline u(t_\ell + \xi)   & =\e^{i \xi c \nab}u(t_\ell) +\e^{-i \xi c \nab} \overline u (t_\ell)   + c^2 \xi^2 \mathcal{R}_{r+4}
 \end{aligned}
\end{equation}
Furthermore, we obtain with the aid of Lemma \ref{lem:2} that
\begin{equation}\label{nnp2}
\begin{aligned}
n(t_\ell+\xi)  &= n(t_\ell) + \xi \dot{n}(t_\ell) 
 + \xi^2 \mathcal{R}_{r+4},\\
\dot{n}(t_\ell+\xi)  
&=\textstyle  \dot{n}(t_\ell)  +\xi  \Delta \left(n(t_\ell) + \frac{1}{4} (u(t_\ell) + \overline u(t_\ell))^2 \right)+ c^2\xi^2 \mathcal{R}_{r+4}.
\end{aligned}
\end{equation}
Thanks to the relation \eqref{up} and approximation \eqref{AppF}  we furthermore obtain (as $n(t)$ is real-valued)  and $\Vert \nab^{-2} c \nab \Vert_r \leq 1$ that
\begin{equation}
\begin{aligned}\label{umu}
i c\nab & \left (u(t_\ell+\xi) - \overline u(t_\ell+\xi)) \right) \\&=  \left(F(t_\ell+\xi) + \overline F(t_\ell+\xi)\right)\\
& = \e^{i \xi c \nab} F(t_\ell) + \e^{-i \xi c \nab} \overline F(t_\ell)\\
& -  \frac{\xi}{2} c \nab^{-1} \big( n c \nab (\e^{i c^2 \xi} u(t_\ell) + \e^{-i c^2 \xi} \overline u(t_\ell)\big)\\
& +  \frac{\xi}{2} c \nab^{-1} \Big( \e^{i c^2 \xi} \varphi_1(-i \xi (c\nab + c^2)) n c \nab \overline u(t_\ell) + 
\e^{-i c^2 \xi} \varphi_1(i \xi (c\nab + c^2)) n c \nab  u(t_\ell) \Big)\\
& +c^2 \xi^2 \mathcal{R}_{r+4}.
\end{aligned}
\end{equation}
 Plugging the expansions \eqref{uF}, \eqref{nnp2} and \eqref{umu} into Duhamel's formula for $(c\nab)^{-1}F$ given in~\eqref{Duh} yields  as $\Vert c^2 \nab^{-2} \Vert_r \leq 1 $ that
 \begin{equation}
\begin{aligned}\label{2Duh}
& (c\nab)^{-1} F(t_\ell+\tau)  = \e^{i \tau c \nab}(c\nab)^{-1} F(t_\ell)  \red\\&+ \frac{i}{2}\black   \e^{i \tau c \nab} \nab^{-2} \int_0^\tau  \e^{- i \xi c \nab}
\Big\{ \Big(\dot{n}(t_\ell)  +\xi  \Delta \big(n(t_\ell) + \frac{1}{4} (u(t_\ell) + \overline u(t_\ell))^2 \big)  \Big)\left(\e^{i \xi c \nab}u(t_\ell) +\e^{-i \xi c \nab} \overline u (t_\ell)  \right) \\
 & +     \Big(n(t_\ell) + \xi \dot{n}(t_\ell) \Big)  \big(F(t_\ell+\xi)+\overline F(t_\ell+\xi)\big) \Big\}\dd \xi + \tau^3 \mathcal{R}_{r+4} .
\end{aligned}
\end{equation}
In the following we set
\begin{align}\label{II}
\mathcal{I}_0(u(t_\ell), n(t_\ell)) :=  u(t_\ell) \Delta \big(n(t_\ell) + \frac14 (u(t_\ell) + \overline u(t_\ell))^2 \big)+ i \dot{n}(t_\ell) \Ac u(t_\ell).
\end{align}
With the aid of the expansion $\e^{ i \xi c\nab} = \e^{i \xi c^2} (1 + i \xi \Ac) + \mathcal{O}(\xi^2 \Ac^2)$ we then obtain together with Definition \ref{Def:phifunc}  that
\begin{equation}\label{a}
\begin{aligned}
&\int_0^\tau   \e^{- i \xi c \nab}\Big(\dot{n}(t_\ell)  +\xi  \Delta \big(n(t_\ell) + \frac{1}{4} (u(t_\ell) + \overline u(t_\ell))^2 \big)  \Big)\left(\e^{i \xi c \nab}u(t_\ell) +\e^{-i \xi c \nab} \overline u (t_\ell)  \right)\dd \xi \\
& = \int_0^\tau   \e^{- i \xi (c \nab-c^2)} \dot{n}(t_\ell)  (1+ i \xi \Ac) u(t_\ell) +\e^{-i \xi (c \nab+c^2) } \dot{n}(t_\ell)  (1 - i \xi \Ac) \overline u (t_\ell)  \dd \xi\\
& + \int_0^\tau   \xi \e^{-i \xi \nab}  \big( \e^{i \xi c^2} u(t_\ell) + \e^{-i \xi^2} \overline u(t_\ell)\big) \Delta (n(t_\ell) + \frac14 (u(t_\ell)+\overline u (t_\ell))^2)  \dd \xi\\
& = \tau \varphi_1(- i \tau \Ac) \dot{n}(t_\ell) u(t_\ell) +\tau  \varphi_1(- i \tau (c \nab+c^2)) \dot{n}(t_\ell) \overline u(t_\ell) 
\\&+ \tau^2 \Psi_2(- i \tau\Ac )\mathcal{I}_0(u(t_\ell),n(t_\ell))  + \tau^2  \Psi_2(- i \tau (c\nab+c^2)) \overline{\mathcal{I}_0(u(t_\ell),n(t_\ell))}.
\end{aligned}
\end{equation}
Using \eqref{uuu} we similarly  obtain that
\begin{equation}\label{b}
\begin{aligned}
&  \int_0^\tau \e^{- i \xi c \nab} \left(\Big(n(t_\ell) + \xi \dot{n}(t_\ell) \Big)  \big(F(t_\ell+\xi)+\overline F(t_\ell+\xi)\big) \right)
\dd \xi \\&
=    \int_0^\tau  \Big\{ \e^{- i \xi \Ac } n(t_\ell) (1+ i \xi \Ac) F(t_\ell) + \e^{-i \xi (c \nab+c^2)} n(t_\ell)  (1- i \xi \Ac) \overline F(t_\ell)\\
& - \frac{\xi}{2} n(t_\ell)  c \nab^{-1} \big( \e^{- i  \xi\Ac }   n c \nab u(t_\ell) + \e^{-i (c\nab+ c^2) \xi}    n c \nab \overline u(t_\ell)\big)\\
& +  \frac{\xi}{2} n(t_\ell) c \nab^{-1} \Big(  \varphi_1(-i \xi (c\nab + c^2)) n c \nab \overline u(t_\ell) + 
\e^{-2 i c^2 \xi} \varphi_1(i \xi (c\nab + c^2)) n c \nab  u(t_\ell) \Big)\Big\}
\dd \xi \\
& +  \int_0^\tau\Big\{ \xi \e^{ - i \xi \Ac} \big(\dot n(t_\ell) F(t_\ell) \big)+ \xi \e^{ - i \xi (c\nab+c^2)} \big(\dot n(t_\ell) \overline F(t_\ell) \big)\Big\}\dd \xi \\
& =  \tau \varphi_1(- i \tau \Ac ) n(t_\ell)  F(t_\ell)  +   \tau \varphi_1(-i \tau (c \nab+c^2)) n(t_\ell)   \overline F(t_\ell)   
\\& +   \tau^2 \Psi_2(- i \tau \Ac) \Big(i n(t_\ell) \Ac F(t_\ell)  + \dot n(t_\ell) F(t_\ell)\Big)
 +    \tau^2 \Psi_2(-i (c \nab + c^2)) \Big( - i  n(t_\ell)\Ac  \overline F(t_\ell) +  \dot n(t_\ell) \overline F(t_\ell)\Big)\\
& +   \frac{\tau^2}{2} n(t_\ell)  c \nab^{-1} 
\Big\{ \Big(-  \Psi_2(- i  \tau\Ac )  + \frac{\varphi_1(i \tau \Ac) - \varphi_1(-2 i c^2 \tau)}{ i\tau  (c \nab+c^2)}\Big)  n c \nab u(t_\ell) \\&
\qquad \qquad  + \Big( - \black \Psi_2(-i (c\nab+ c^2) \tau)  +  \varphi_2(-i \tau (c\nab + c^2))\Big)  n c \nab \overline u(t_\ell)\Big\}.
\end{aligned}
\end{equation}
Now we set 
\begin{align}\label{2II}
\mathcal{I}(u(t_\ell), n(t_\ell)) :=  u(t_\ell) \Delta \big(n(t_\ell) + \frac14 (u(t_\ell) + \overline u(t_\ell))^2 \big)+  \dot{n}(t_\ell) \big(i\Ac u(t_\ell) + F(t_\ell)\big) + i n(t_\ell) \Ac F(t_\ell) .
\end{align}
Plugging the calculations \eqref{a} and \eqref{b} into \eqref{2Duh} yields together with \eqref{II} that
\begin{equation}
\begin{aligned}\label{F2order}
(c\nab&)^{-1} F(t_\ell+\tau)  = \e^{i \tau c \nab}(c\nab)^{-1} F(t_\ell)  + \tau \frac{i}{2}\black   \e^{i \tau c \nab} \nab^{-2 }\\&\Big\{
\varphi_1(- i \tau \Ac) \Big( \dot{n}(t_\ell) u(t_\ell) +  n(t_\ell) F(t_\ell)\Big) + \varphi_1(- i \tau (c \nab+c^2)) \Big(\dot{n}(t_\ell) \overline u(t_\ell) +   n(t_\ell)   \overline F(t_\ell)  \Big)
\\&+ \tau \Psi_2(- i \tau\Ac )\mathcal{I}(u(t_\ell),n(t_\ell))   +  \tau  \Psi_2(- i \tau (c\nab+c^2)) \overline{\mathcal{I}(u(t_\ell),n(t_\ell))}\\&
+   \frac{\tau}{2} n(t_\ell)  c \nab^{-1} 
\Big[ \Big( -\Psi_2(- i  \tau\Ac )  + \frac{\varphi_1(i \tau \Ac) - \varphi_1(-2 i c^2 \tau)}{ i\tau  (c \nab+c^2)}\Big)  n c \nab u(t_\ell) \\&
\qquad \qquad  + \Big(  - \black \Psi_2(-i (c\nab+ c^2) \tau)  +  \varphi_2(-i \tau (c\nab + c^2))\Big)  n c \nab \overline u(t_\ell)\Big]\Big\}+ \tau^3 \mathcal{R}_{r+4}
\end{aligned}
\end{equation}
with $\mathcal{I}(u(t_\ell),n(t_\ell))$ defined in \eqref{2II}.
\subsubsection{Approximation in $u$} 
We have that (see \eqref{kgzS})
\begin{equation}\label{up2}
u(t_\ell) = (c \nab)^{-1}\left\{ - i F(t_\ell)   - \frac12 c\nab^{-1} n(t_\ell) \left( I^F(t_\ell) + \overline{I^F(t_\ell)}\right) \right\}
\end{equation}
with
\[
I^F(t_\ell) := u(0) + \int_0^{t_\ell} F(\xi)\dd\xi =  u(0) + \sum_{k=0}^{\ell-1} \int_0^\tau F(t_k+\xi) \dd \xi .
\]
Note that the expansion \eqref{AppF} implies that
\begin{equation}
\begin{aligned}\label{appIF2}
I^F(t_\ell) & = u(0) + \sum_{k=0}^{\ell -1} \int_0^\tau \Big\{ \e^{i \xi c \nab} F(t_k)
 + i \black \frac{\xi}{2}  c\nab^{-1}\e^{i c^2 \xi} \varphi_1\big(\textstyle i  \xi \frac12 \Delta\big) \Big(\dot{n}(t_k)  u(t_k) + i n(t_k)  c \nab u(t_k)\Big)\\
  &     \red\qquad \qquad\qquad  + i \black\frac{\xi}{2} c \nab^{-1}\e^{i c^2 \xi}\varphi_1\big(-i \xi( c\nab+c^2)\big)\Big(\dot{n}(t_k) \overline u(t_k)- i n(t_k)  c \nab \overline u(t_k)\Big) \Big\} \dd \xi +  \tau^2 \mathcal{R}_{r+4}.
\end{aligned}
\end{equation}
Setting
\begin{equation}\label{SF2order}
\begin{aligned}
S^{F,2}(t_\ell) & := \sum_{k=0}^{\ell -1}\Big\{\textstyle \tau \varphi_1(i \tau c \nab) F(t_k)
+ i \frac{\tau^2}{2} c\nab^{-1} \Psi_2(i c^2 \tau)\varphi_1(i \tau \frac12 \Delta) \black\Big(\dot{n}(t_k)  u(t_k) + i n(t_k)  c \nab u(t_k)\Big)\\
  &    +i\frac{\tau}{2}  c\nab^{-1}
  \frac{ i \tau c^2}{-i (c\nab+c^2)} \frac{\varphi_1(-i \tau c \nab) - \varphi_1(i \tau c^2)}{i \tau c^2}
\black\Big(\dot{n}(t_k) \overline u(t_k)- i n(t_k)  c \nab \overline u(t_k)\Big) \Big\}
  \end{aligned}
\end{equation}
yields together with Definition \ref{Def:phifunc} that 
\begin{equation}\label{u2order}
u(t_\ell) = (c \nab)^{-1}\left\{ - i F(t_\ell)   - \frac12 c\nab^{-1} n(t_\ell) \left(u(0)+ S^{F,2}(t_\ell) + \overline{u(0)+S^{F,2}(t_\ell)}\right) \right\} + \tau^2 \mathcal{R}_{r+4}.
\end{equation}

\subsubsection{Approximation in $(n,\dot{n})$}
In the following we set
\[
\mathcal{J}(u(t_\ell),n(t_\ell),\xi) :=  \frac12 c \nab^{-1} 
 \Big\{ n(t_\ell) u(t_\ell) +\varphi_1\left(-i \xi (c\nab+ c^2)\right) \big(n(t_\ell)\overline u(t_\ell)\big)\Big\}.
\] 
Plugging the expansion given in Lemma \ref{lem:2} into Duhamel's formula \eqref{Duh} we obtain with the aid of \eqref{appsin1} and the observation 
\begin{align*}
\left \vert \e^{i\xi  c \nab} u(t_\ell)\right\vert^2 
    & =\Big(( \e^{i \xi \Ac } - 1  ) u(t_\ell)\Big) \overline u (t_\ell) + u(t_\ell) \e^{ - i \xi \Ac} \overline u(t_\ell)
    + \mathcal{O}(\xi^2 \Ac u(t_\ell))\\
     \left( \e^{i\xi  c \nab} u(t_\ell)\right)^2
     & = - \e^{2i c^2 \xi} u^2(t_\ell) + 2 u(t_\ell) \e^{i \xi(c^2+c\nab)} u(t_\ell)+ \mathcal{O}(\xi^2 \Ac u(t_\ell))
\end{align*}
that
\begin{equation*}
\begin{aligned}
\dot{n}(t_\ell+\tau)  
 &= - \nabo \sino n(t_\ell) + \coso \dot{n}(t_\ell) \\
& + \frac14  \Delta \int_0^\tau \Big \{2
\overline u (t_\ell) (\e^{i \xi \Ac} - 1)u(t_\ell) + 2u(t_\ell) \e^{-i \xi \Ac} \overline u(t_\ell) \\
& - \e^{2ic^2 \xi} u^2(t_\ell) + 2 u(t_\ell) \e^{i \xi(c^2+c\nab)} u(t_\ell)  - \e^{-2ic^2 \xi} \overline u^2(t_\ell) + 2 \overline u(t_\ell) \e^{-i \xi(c^2+c\nab)} \overline u(t_\ell)
\\&
+  2i \xi \Big(\e^{2ic^2 \xi} u(t_\ell) \mathcal{J}(u(t_\ell),n(t_\ell),\xi) - \e^{- 2i c^2 \xi} \overline u(t_\ell) \overline{\mathcal{J}(u(t_\ell),n(t_\ell),\xi)}
\Big)\\&
+ 2i \xi \Big( -u(t_\ell) \overline{\mathcal{J}(u(t_\ell),n(t_\ell),\xi)}+ \overline u(t_\ell) \mathcal{J}(u(t_\ell),n(t_\ell),\xi)\Big) \Big\}\mathrm{d}\xi
\\ &+ \tau^3 \mathcal{R}_{r+5}.\\
\end{aligned}
\end{equation*}
Note that 
\begin{equation}\label{I12}
\begin{aligned}
2i & \int_0^\tau \xi \e^{2ic^2 \xi} u(t_\ell) \mathcal{J}(u(t_\ell),n(t_\ell),\xi) \dd \xi \\
 & = i \int_0^\tau \xi \e^{2ic^2 \xi} u(t_\ell) c \nab^{-1}  (n(t_\ell) u(t_\ell)) \dd \xi
 +  i \int_0^\tau \xi \e^{2ic^2 \xi} u(t_\ell) c \nab^{-1} \varphi_1\left(-i \xi (c\nab+ c^2)\right) \big(n(t_\ell)\overline u(t_\ell)\big)\dd \xi\\
& =   i  \tau^2\black  \mathrm{sinc}(\tau \nabo)\Psi_2(2ic^2\tau) u(t_\ell)  c\nab^{-1}(n(t_\ell) u(t_\ell) )
\\& \quad + i \black \tau u(t_\ell)  \frac{1}{  - i \black ( \nab + c)} \Big( \varphi_1(-i \tau \Ac) - \varphi_1(2ic^2\tau)\Big) \nab^{-1} (n(t_\ell) \overline u(t_\ell)) =:\tau  \mathcal{J}_1^{\dot n}(u(t_\ell),n(t_\ell),\tau )\\
2i &  \int_0^\tau \xi \overline u(t_\ell)  \mathcal{J}(u(t_\ell),n(t_\ell),\xi) \dd \xi \\
& =  i \mathrm{sinc}(\tau \nabo) \int_0^\tau \xi   \overline u(t_\ell) c \nab^{-1}  (n(t_\ell) u(t_\ell)) \dd \xi
\\
& +  i\mathrm{sinc}(\tau \nabo)\black \int_0^\tau \xi  \overline u(t_\ell) c \nab^{-1} \varphi_1\left(-i \xi (c\nab+ c^2)\right) \big(n(t_\ell)\overline u(t_\ell)\big) \dd \xi+ \tau^3 \mathcal{R}_{r+3}\\
 & =  i \frac{\tau^2}{2}\mathrm{sinc}(\tau \nabo)\black   \overline u(t_\ell) c \nab^{-1}  (n(t_\ell) u(t_\ell))
 \\&+  i \tau^2 \mathrm{sinc}(\tau \nabo)\black  \Big(\overline u(t_\ell)  \varphi_2\left(-i \tau (c\nab+ c^2)\right)c \nab^{-1} \big(n(t_\ell)\overline u(t_\ell)\big)\Big)+ \tau^3 \mathcal{R}_{r+3} \\
 & =:\tau \mathcal{J}_2^{\dot n}(u(t_\ell),n(t_\ell),\tau ).
\end{aligned}
\end{equation}
Together with Definition \ref{Def:phifunc} we thus obtain that
\begin{equation}\label{nnp2order}
\begin{aligned}
\dot{n}(t_\ell+\tau)  
 &= - \nabo \sino n(t_\ell) + \coso \dot{n}(t_\ell) \\
& + \frac14 \tau  \Delta\Big \{
2 \overline u (t_\ell) (\varphi_1(i \tau \Ac) - 1)u(t_\ell) +2  u(t_\ell) \varphi_1(-i \tau \Ac) \overline u(t_\ell)  - \varphi_1(2ic^2 \xi) u^2(t_\ell) \\&+ 2  u(t_\ell) \varphi_1(i \xi(c^2+c\nab)) u(t_\ell)  -\varphi_1(-2ic^2 \tau) \overline u^2(t_\ell) + 2\overline u(t_\ell) \varphi_1(-i \tau(c^2+c\nab))\overline u(t_\ell)
\\&
+  \mathcal{J}_1^{\dot n}(u(t_\ell),n(t_\ell),\tau) + \black   \overline{\mathcal{J}_1^{\dot n}}(u(t_\ell),n(t_\ell),\tau)+  \mathcal{J}_2^{\dot n}(u(t_\ell),n(t_\ell),\tau)+  \overline {\mathcal{J}_2^{\dot n}}(u(t_\ell),n(t_\ell),\tau) \Big\}
\\ &+ \tau^3 \mathcal{R}_{r+5}.
\end{aligned}
\end{equation}
\subsubsection{A uniformly accurate oscillatory integrator of second order}
Collecting the results \eqref{F2order} and \eqref{u2order} (together with \eqref{SF2order}) as well as the approximation \eqref{Appnnp} for $n$ and \eqref{nnp2order}  for $\dot n$ motivates us to  define our second order numerical scheme as follows: 

For $0 \leq \ell \leq n-1$ we set
\begin{equation}\label{scheme2ord}
\begin{aligned}
 \big(c \nab\big)^{-1}   F_{\ell+1} &  = \e^{i \tau c \nab}(c\nab)^{-1} F_\ell  + \tau \frac{i}{2}\black   \e^{i \tau c \nab} \nab^{-2 }\\&\Big\{
\varphi_1(- i \tau \Ac) \Big( \dot{n}_\ell u_\ell +  (\psi  n_\ell \black) F_\ell\Big) + \varphi_1(- i \tau (c \nab+c^2)) \Big(\dot{n}_\ell \overline u_\ell +   (\psi  n_\ell \black)  \overline F_\ell  \Big)
\\&+ \tau \psi  \black \Psi_2(- i \tau\Ac )\mathcal{\mathcal{I}}(u_\ell,n_\ell)   +  \tau  \Psi_2(- i \tau (c\nab+c^2)) \overline{\mathcal{\mathcal{I}}(u_\ell,n_\ell)}\\&
+   \frac{\tau}{2} \textstyle  \psi \black \Big(n_\ell  c \nab^{-1} 
\Big[ \Big( \Psi_2(- i  \tau\Ac )  + \frac{\varphi_1(i \tau \Ac) - \varphi_1(-2 i c^2 \tau)}{ i\tau  (c \nab+c^2)}\Big)  n c \nab u_\ell \\&
\qquad \qquad + \Big( - \black \Psi_2(-i (c\nab+ c^2) \tau)  +  \varphi_2(-i \tau (c\nab + c^2))\Big)  n c \nab \overline u_\ell\Big]\Big)\Big\}\\
  n_{\ell+1}  &= \coso n_\ell+ \nabo^{-1} \sino \dot{n}_\ell \\
& + \frac{\tau^2}{4} \sinco \Delta 
\left\{  \vert u_\ell \vert^2 + \varphi_2(2i c^2\tau) u_\ell ^2 + \varphi_2(-2ic^2\tau) \overline{u_\ell }^2
\right\}\\
\dot{n}_{\ell+1} 
 &= - \nabo \sino n_\ell + \coso \dot{n}_\ell \\
& + \frac14 \tau  \Delta\Big \{2
\overline u _\ell (\varphi_1(i \tau \Ac) - 1)u_\ell +2 u_\ell \varphi_1(-i \tau \Ac) \overline u_\ell  - \varphi_1(2ic^2 \xi) u^2_\ell \\&+ 2  u_\ell \varphi_1(i \xi(c^2+c\nab)) u_\ell  -\varphi_1(-2ic^2 \tau) \overline u^2_\ell + 2\overline u_\ell \varphi_1(-i \tau(c^2+c\nab))\overline u_\ell
\\&
 +  \mathcal{J}_1^{\dot n}(u_\ell,n_\ell,\tau)   + \overline{\mathcal{J}_1^{\dot n}(u_\ell,n_\ell,\tau)}    +\mathcal{J}_2^{\dot n}(u_\ell,n_\ell,\tau)+ \overline{\mathcal{J}_2^{\dot n}(u_\ell,n_\ell,\tau)} \Big\}\\
 S^{F,2}_{\ell+1}  & =  S^{F,2}_\ell + \tau \varphi_1(i \tau c \nab) F_\ell \\&
 + i \black \frac{\tau^2}{2}  \nab^{-2} \Psi_2(i\tau c^2) \varphi_1(i \textstyle \tau \frac12  \Delta)\Big(\dot{n}_\ell  u_\ell + i n_\ell  c \nab u_\ell\Big)\\
  &    +i\frac{\tau^2}{2}  c\nab^{-1}
  \frac{  c^2}{- (c\nab+c^2)} \frac{\varphi_1(-i \tau c \nab) - \varphi_1(i \tau c^2)}{i \tau c^2}
\black\Big(\dot{n}_\ell \overline u_\ell- i n_\ell  c \nab \overline u_\ell\Big)
\\ u_{\ell+1} &= (c \nab)^{-1}\left\{ - i F_{\ell+1}   - \frac12 c\nab^{-1} n_{\ell+1} \left( u_0 + S^{F,2}_{\ell+1} + \overline{u_0+S^{F,2}_{\ell+1}}\right) \right\} 
\end{aligned}
\end{equation}
with $\mathcal{I}(u_\ell,n_\ell)$ defined in \eqref{2II}, $\mathcal{J}^{\dot n}_1$ and $\mathcal{J}^{\dot n}_2$ given in \eqref{I12}, the initial values
\begin{equation}\label{initial}
\begin{aligned}
& u_0 := u(0), \qquad n_0:=n(0),\qquad \dot{n}_0 := \partial_t n(0),\\
& F_0 := i c \nab u_0      + \frac{i}{2}  \black c\nab^{-1} n_0 (u_0+\overline{u_0}), \qquad  S_0^{F,2} := 0
\end{aligned}
\end{equation}
and where we have included the filter function $ \psi = \textstyle\mathrm{sinc}(\tau\frac12 \Delta)$.

\subsection{Convergence of the second order scheme}
The numerical solutions $(\u,\F,\n,\np)$ defined by the oscillatory  integration scheme \eqref{scheme2ord} allow a second-order approximation to the exact solution $(u\tl, F\tl,n\tl,\dot{n}\tl)$ of the Klein-Gordon-Zakharov system \eqref{kgzS} uniformly in $c$. More precisely, with
\[
z_\ell := \frac12 ( u_\ell + \overline{u}_\ell)
\]
(recall the transformation \eqref{z}) the following convergence result holds.
\begin{thm}[Second order convergence] Fix $r>d/2$. Assume that $(u(0),n(0),\dot{n}(0)) \in H^{r+6} \times H^{r+5} \times H^{r+4}$. Then there exist constants $T>0, \tau_0>0,K>0$ such that for all $t_\ell \leq T, \tau \leq \tau_0$  and all \textcolor{red}{$c\geq1$} we have that
\begin{equation}
\Vert z(t_\ell) - z_\ell \Vert_{r+1} + \Vert n\tl - \n\Vert_{r} + \Vert \dot{n}\tl - \np\Vert_{r-1} \leq K \tau^2,
\end{equation}
where the constant $K$ depends on T, on $M_{T,r+5}$ defined in \eqref{Mr}, and on $r$, but can be chosen independently of $c$.\label{thm:conv2}
\end{thm}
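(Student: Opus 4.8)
The plan is to reproduce the proof of Theorem~\ref{thm:conv} step by step, the only structural difference being that the scheme~\eqref{scheme2ord} was built so as to reproduce the exact Duhamel flow with a local error of order $\tau^{3}$ instead of $\tau^{2}$: this is the content of~\eqref{F2order} for $(c\nab)^{-1}F$, of~\eqref{u2order} for $u$ (via $S^{F,2}$ from~\eqref{SF2order}), of the unchanged $n$-update, and of~\eqref{nnp2order} for $\dot n$. First I would invoke local wellposedness of~\eqref{kgzS} (see~\cite{LWP1a,LWP1b,LWP2}) to fix $T>0$ for which $M_{T,r+5}$ (see~\eqref{Mr}) is finite under the assumed regularity, observing through~\eqref{0u}, Lemma~\ref{cnab} and~\eqref{bil2} that $\Vert(c\nab)^{-1}F(t)\Vert_{r+1}\le\Vert u(t)\Vert_{r+1}+K\Vert n(t)\Vert_{r-1}\Vert u(t)\Vert_{r+1}$, so that all four components of the exact solution are controlled by $M_{T,r+5}$. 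Then I would introduce the error quantities $e^{F}_{\ell}=\Vert(c\nab)^{-1}(F(t_{\ell})-F_{\ell})\Vert_{r+1}$, $e^{u}_{\ell}=\Vert u(t_{\ell})-u_{\ell}\Vert_{r+1}$, $e^{n}_{\ell}=\Vert n(t_{\ell})-n_{\ell}\Vert_{r}$, $e^{\dot n}_{\ell}=\Vert\dot n(t_{\ell})-\dot n_{\ell}\Vert_{r-1}$ and $e^{S}_{\ell}=\Vert(c\nab)^{-1}(S^{F,2}(t_{\ell})-S^{F,2}_{\ell})\Vert_{r+1}$, working throughout under the a priori assumption $B_{t_{\ell},r}\le M_{1}$ (see~\eqref{Mn}) to be removed at the end by a bootstrap.

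For the error in $F$ I would subtract the $F$-line of~\eqref{scheme2ord} from~\eqref{F2order}, use that $\e^{i\tau c\nab}$ is an $H^{r+1}$-isometry, and be left with $\tau$ times a sum of differences of the nonlinearities at $(u(t_{\ell}),n(t_{\ell}),\dot n(t_{\ell}),F(t_{\ell}))$ versus $(u_{\ell},n_{\ell},\dot n_{\ell},F_{\ell})$, each carrying a factor $\nab^{-2}$ and one of the operators $\varphi_{1}(-i\tau\Ac)$, $\varphi_{1}(-i\tau(c\nab+c^{2}))$, $\Psi_{2}(-i\tau\Ac)$, $\Psi_{2}(-i\tau(c\nab+c^{2}))$, $\varphi_{2}(-i\tau(c\nab+c^{2}))$, the difference quotient $\frac{\varphi_{1}(i\tau\Ac)-\varphi_{1}(-2ic^{2}\tau)}{i\tau(c\nab+c^{2})}$, and the filter $\psi=\mathrm{sinc}(\tau\tfrac12\Delta)$, plus a $\tau^{3}\mathcal{R}_{r+4}$ remainder. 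All of these operators are bounded by $1$ on each $H^{s}$, being $\varphi$- and $\Psi$-functions of imaginary multiples of the self-adjoint operators $\Ac$, $c\nab+c^{2}$, $\tfrac12\Delta$, while $\Vert(\mathbf{1}-\psi)f\Vert_{r}\le K\tau\Vert f\Vert_{r+2}$, so $\psi$ only contributes admissible $\tau$-gains. The genuine point is the loss of derivative carried by $c\nab$ and $\Ac=c\nab-c^{2}$ when they hit $F$ or $u$; there I would rewrite $c\nab F=(c\nab)^{2}(c\nab)^{-1}F$, use $\nab^{-2}c\nab=c\nab^{-1}$ with $\Vert c\nab^{-1}\Vert_{r}\le1$ together with $\Vert\nab^{-2}(f\,c\nab\,g)\Vert_{r-1}\le K\Vert f\Vert_{r-1}\Vert g\Vert_{r+1}$ from Lemma~\ref{cnab} and the refined bilinear estimate~\eqref{bil2}, exactly as in~\eqref{E1}--\eqref{E2}. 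Splitting each difference in the telescoping manner of~\eqref{rnu}--\eqref{uuu} then yields
\[
e^{F}_{\ell+1}\le e^{F}_{\ell}+\tau K M_{t_{\ell},r+1}B_{t_{\ell},r}\Big(e^{u}_{\ell}+\tfrac1\tau e^{n}_{\ell}+e^{\dot n}_{\ell}\Big)+\tau^{3}KM_{t_{\ell+1},r+5}^{p}.
\]

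For the remaining components I would proceed as follows. Subtracting~\eqref{SF2order} from the $S^{F,2}$-line of~\eqref{scheme2ord} and using $\Vert\varphi_{1}(i\tau c\nab)\Vert_{r}\le1$, $\Vert\Psi_{2}\Vert_{r}\le1$ and $\nab^{-2}c\nab=c\nab^{-1}$ to absorb the $c\nab$ in the $\overline u$-term gives $e^{S}_{\ell}\le\tau\sum_{k<\ell}\big(e^{F}_{k}+KM_{t_{\ell},r+1}B_{t_{\ell},r}(e^{u}_{k}+e^{n}_{k}+e^{\dot n}_{k})\big)$. Subtracting~\eqref{u2order} from the $u$-line and treating the coupling $\nab^{-2}n(S^{F,2}+\overline{S^{F,2}})$ exactly as in~\eqref{E3}--\eqref{E4} gives $e^{u}_{\ell}\le c^{-1}\Vert\nab^{-1}(F(t_{\ell})-F_{\ell})\Vert_{r+1}+KB_{t_{\ell},r}t_{\ell}\tfrac1\tau e^{n}_{\ell}+KM_{t_{\ell},r+1}e^{S}_{\ell}+\tau^{2}KM_{t_{\ell+1},r+5}^{p}$. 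For $(n,\dot n)$ I would repeat the argument around~\eqref{errnnp} with the isometric rotation $\mathcal{D}(\tau\nabo)$, noting that the $n$-update is unchanged and that in the new $\dot n$-update~\eqref{nnp2order} every coefficient (the $\varphi_{1}(\pm2ic^{2}\tau)$, $\varphi_{1}(\pm i\tau(c\nab+c^{2}))$, $\Psi_{2}$, $\varphi_{2}$, $\mathrm{sinc}(\tau\nabo)$, and the $c\nab^{-1}$ inside $\mathcal{J}^{\dot n}_{1},\mathcal{J}^{\dot n}_{2}$ from~\eqref{I12}) is bounded by $1$, so that the $\mathcal{J}^{\dot n}$ and quadratic terms are handled by~\eqref{uuu}; solving this recursion yields
\[
\tfrac1\tau e^{n}_{\ell+1}+e^{n}_{\ell+1}+e^{\dot n}_{\ell+1}\le KM_{t_{\ell},r}B_{t_{\ell},r}\,\tau\!\sum_{k\le\ell}e^{u}_{k}+\tau^{2}KM_{t_{\ell+1},r+5}^{p}.
\]

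To conclude I would insert the $(n,\dot n)$ and $e^{S}$ bounds into the $u$-bound and the result into the $F$-recursion and, using $\tau\sum_{k\le\ell}\Vert f_{k}\Vert\le t_{\ell+1}\sup_{k\le\ell}\Vert f_{k}\Vert$, arrive at $e^{F}_{\ell+1}\le(1+\widetilde{\mathcal{K}}_{1}\tau)\max_{k\le\ell}e^{F}_{k}+\tau^{3}\widetilde{\mathcal{K}}_{2}$ with $\widetilde{\mathcal{K}}_{1},\widetilde{\mathcal{K}}_{2}$ depending on $T$ and $M_{T,r+5}$ but not on $c$; a discrete Gronwall argument then gives $e^{F}_{\ell}\le K\tau^{2}$, and back-substitution yields $e^{u}_{\ell}+e^{n}_{\ell}+e^{\dot n}_{\ell}\le K\tau^{2}$ uniformly in $c$. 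The a priori bound $B_{t_{\ell},r}\le M_{1}$ is justified a posteriori by a bootstrap valid for $\tau\le\tau_{0}$ small, a \emph{Lady Windermere's fan} argument (see~\cite{HNW93,Lubich08}) turns the one-step estimates into the stated global bound, and the transformation~\eqref{z} gives the assertion for $z$. I expect the main obstacle to be exactly this stability step for $F$: checking that each of the numerous additional second-order terms --- in particular the $\Psi_{2}$-terms, the difference quotients built from $\Ac$ and $c\nab+c^{2}$, and the filtered products --- admits a bound independent of $c$ even though it contains the unbounded operators $c\nab$ and $\Ac$. The resolution is uniform, always pairing a derivative-losing factor with the $\nab^{-2}$-smoothing and rewriting $F=(c\nab)(c\nab)^{-1}F$, but it has to be carried out term by term, which is the bulk of the work.
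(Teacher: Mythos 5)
Your proposal takes the same route as the paper: the paper's own proof of Theorem \ref{thm:conv2} is only a sketch that defers to the first-order analysis of Section \ref{sec:con1}, citing exactly the two ingredients you identify --- the $\mathcal{O}(\tau^3)$ local error built into \eqref{F2order}, \eqref{u2order}, \eqref{nnp2order} by construction, and the stability estimates $\Vert \nab^{-2}(f\, c\nab\, g)\Vert_{r-1}\leq K\Vert f\Vert_{r-1}\Vert g\Vert_{r+1}$ and $\tau\Vert \Phi f\Vert_{r+1}\leq 2\Vert f\Vert_{r-2}$ for $\Phi\in\{\psi,\varphi_1,\varphi_2,\Psi_2\}$ --- followed by the same Lady Windermere's fan argument. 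One small correction: for the filter you quote $\Vert(1-\psi)f\Vert_r\leq K\tau\Vert f\Vert_{r+2}$, but to keep the local error of the filtered terms at order $\tau^3$ one needs the quadratic version $\Vert(\psi-1)f\Vert_r\leq \frac14\tau^2\Vert f\Vert_{r+4}$, so that $\tau\Vert(\psi-1)n(t_\ell)\Vert_{r+1}\leq\tau^3\Vert n(t_\ell)\Vert_{r+5}$; this is the bound the paper actually uses and it also explains the $H^{r+5}$ regularity required of $n$.
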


\begin{proof}
The convergence analysis of the second order scheme \eqref{scheme2ord} follows the line of argumentation taken in Section \ref{sec:con1} in the analysis of the first order scheme  (see also the proof of Theorem~\ref{thm:conv}). Therefore we only give the central steps. \\
\emph{Local error:} The local error bound at order $\mathcal{O}(\tau^3)$ uniformly in $c$ under the stated regularity assumptions  follows by construction (e.g., taking the difference of the exact solution \eqref{F2order}, \eqref{u2order},  \eqref{nnp2order} and the numerical scheme  \eqref{scheme2ord}) together with the observation
 \[
 \left \Vert (\psi -1) f \right\Vert_r = \left \Vert\left( \textstyle\mathrm{sinc}(\tau\frac12 \Delta)-1\right)f \right \Vert_r \leq \textstyle \frac14 \tau^2 \Vert f \Vert_{r+4}
 \] 
 which implies that $\tau \Vert (\psi - 1) n(t_\ell) \Vert_{r+1} \leq \tau^3 \Vert n(t_\ell)\Vert_{r+5}$.\\
 \emph{Stability:} The stability analysis follows the line of argumentation given in Section \ref{sec:con1} for the first order scheme. The essential estimates
  \begin{align*}
 &\Vert \nab^{-2} (f c \nab g )\Vert_{r-1} \leq K \Vert f \Vert_{r-1} \Vert g \Vert_{r+1}\\
 &\tau \Vert \Phi(i \textstyle \tau \frac12 \Delta) f \Vert_{r+1} 
 + \tau \Vert \Phi(- i \tau(c\nab+c^2)) f \Vert_{r+1} \leq 2 \Vert f \Vert_{r-2} \quad \text{for} \quad \Phi = \psi, \varphi_1, \varphi_2, \Psi_2
 \end{align*}
 which follow from Lemma \ref{cnab} and Definition \ref{Def:phifunc} yield the estimate \eqref{errS} with the local error increased by one order. The assertion then follows by a \emph{Lady Windermere's fan} argument (see, for example~\cite{HNW93,Lubich08}), see also the proof of Theorem \ref{thm:conv}.
\end{proof}
\begin{rem} The second-order scheme \eqref{scheme2ord} asymptotically converges to the Zakharov limit stystem \eqref{zak} (see also Remark \ref{Z1}).

\end{rem}

\section{Numerical experiments}
In this section we numerically underline the theoretical convergence results presented in Theorem \ref{thm:conv} \Red and \ref{thm:conv2}\black. The numerical experiments in particular confirm the uniform convergence property in the high-plasma frequency~$c$. In the numerical experiments (see Figure \ref{fig1}) we observe that the error does not increase for $c \to \infty$ which is the main objective of the derived oscillatory integrator \eqref{scheme} and \eqref{scheme2ord}. Furthermore, we numerically underline the asymptotic convergence (see Section \ref{AC}) at order $c^{-2}$ of the scheme \eqref{scheme} towards solutions of the Zakharov limit system \eqref{zaki} (see Figure \ref{fig3}).

In the numerical experiments we use a standard Fourier pseudo spectral spatial discretization with the largest Fourier mode $K = 2^{6}$ which corresponds to a spatial mesh size $\Delta x =  0.19$. Furthermore, we  choose the ($c-$dependent)  initial values
\begin{equation*}
\begin{split}
z(0,x) = \frac{1}{4} \frac{\sin(x)}{2- \cos (2x)}, \quad \partial_t z(0,x) = c^2  z(0,x), \quad
n(0,x) =\frac{ \sin(x)\cos(x)}{2-\sin(2x)}, \quad \partial_t n(0,x) = \frac12 \sin(x)
\end{split}
\end{equation*}
\black
which we integrate up to $T = 1$ \black. In Figure \ref{fig1} we plot the time-step size versus the error of the oscillatory integration schemes of first- \eqref{scheme} and second-order \eqref{scheme2ord} for different values of $c$. The error in $z$ and $n$ is thereby measured in a discrete $H^1$ and $L^2$ norm, respectively. As a reference solution we use the scheme itself with a finer time step size $\tau \approx 6\cdot 10^{-4} $ \black. \\

%

\begin{figure}[h!]
	\centering
	\resizebox{.85\linewidth}{!}{\input{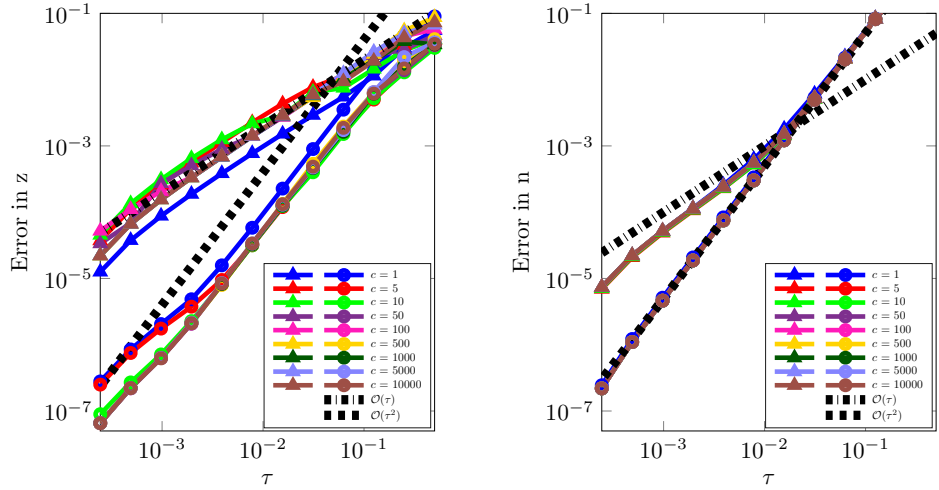}}
	\caption{Order plot of the oscillatory integration scheme of first-order \eqref{scheme} and second-order \eqref{scheme2ord} (double logarithmic scale): Error versus time step size. The slope of the dashed-dotted line is one and of the dashed line is two.}
	\label{fig1}
\end{figure}

%

In Figure  \ref{fig3} we numerically confirm the theoretical  convergence rate at order $c^{-2}$ given in \eqref{b:asymp} of the oscillatory integration scheme \eqref{scheme}  towards solutions of the Zakharov limit system \eqref{zaki}  (see also Section \ref{AC}). Thereby, we use the trigonometetric integrator  derived in \cite{HerrS17} to solve the Zakharov limit system~\eqref{zaki} numerically. Note that in \cite{HerrS17} the complex conjugated version of \eqref{zak} is considered (i.e., the equation in $\overline z$) with a scaling factor $\frac12$. The simulation is carried out with step size $\tau \approx 10^{-6}$. \\

\begin{figure}[h!]
	\centering
	\resizebox{.5\linewidth}{!}{
%
%
\definecolor{mycolor1}{rgb}{0.00000,0.44700,0.74100}%
\definecolor{mycolor2}{rgb}{0.85000,0.32500,0.09800}%
\definecolor{mycolor3}{rgb}{0.92900,0.69400,0.12500}%
\begin{tikzpicture}

\begin{axis}[%
width=4.5in,
height=2.0in,
at={(0.0in,0.0in)},
scale only axis,
xmode=log,
xmin=1,
xmax=1024,
xminorticks=true,
xlabel style={font=\bfseries},
xlabel={$c$},
ymode=log,
ymin=1.53111489755082e-06,
ymax=2.56058622395426,
yminorticks=true,
xtick={1e0, 1e1, 1e2, 1e3},
ytick={1e0, 1e-1, 1e-3, 1e-5, 1e-6},
ylabel style={font=\normalsize},
ylabel={$\text{Error}$},
axis background/.style={fill=white},
legend style={yshift=0.0cm,xshift=0.0cm,nodes={scale=0.9, transform shape},legend cell align=left,align=left,draw=white!10!black,legend pos=south west}
]
\addplot [color=mycolor1,solid,line width=2.0pt,mark=o,mark options={solid}]
  table[row sep=crcr]{%
1	2.49451854245962\\
2	2.56058622395426\\
2.82842712474619	2.41549477587747\\
4	2.31910241085377\\
5.65685424949238	2.3390646659369\\
8	2.3559345097313\\
11.3137084989848	2.36317077961077\\
16	1.78602114943219\\
22.6274169979695	1.05664806831337\\
32	0.570405579020334\\
45.2548339959391	0.29560406213711\\
64.0000000000001	0.150266725945533\\
90.5096679918782	0.0757217086700728\\
128	0.0379865777363013\\
181.019335983756	0.0190324273859306\\
256	0.00952528670150864\\
362.038671967513	0.00476537414839776\\
512.000000000001	0.00238406272750457\\
724.077343935026	0.00119304989484375\\
1024	0.000597525223731854\\
};
\addlegendentry{$\Vert z_{\text{UA}} - z_{\text{lim}} \Vert_{H^1}$};

\addplot [color=mycolor2,solid,line width=2.0pt,mark=triangle,mark options={solid}]
  table[row sep=crcr]{%
1	0.0789268359820593\\
2	0.0671259668364969\\
2.82842712474619	0.044214135802572\\
4	0.0291554588233064\\
5.65685424949238	0.0177562695026546\\
8	0.0120915985769484\\
11.3137084989848	0.00894891663251743\\
16	0.00553173146706998\\
22.6274169979695	0.00296902453915092\\
32	0.00151702243379059\\
45.2548339959391	0.00076371450957011\\
64.0000000000001	0.000382729523380365\\
90.5096679918782	0.000191533286950008\\
128	9.58132650754035e-05\\
181.019335983756	4.79299224974959e-05\\
256	2.39832349865012e-05\\
362.038671967513	1.200873158773e-05\\
512.000000000001	6.02116118082157e-06\\
724.077343935026	3.02732978422887e-06\\
1024	1.53111489755082e-06\\
};
\addlegendentry{$\Vert n_{\text{UA}} - n_{\text{lim}} \Vert_{L^2}$};

\addplot [color=mycolor3,dashed,line width=4.0pt]
  table[row sep=crcr]{%
1	1\\
2	0.25\\
2.82842712474619	0.125\\
4	0.0625\\
5.65685424949238	0.03125\\
8	0.015625\\
11.3137084989848	0.00781249999999999\\
16	0.00390625\\
22.6274169979695	0.001953125\\
32	0.000976562499999999\\
45.2548339959391	0.000488281249999999\\
64.0000000000001	0.000244140625\\
90.5096679918782	0.0001220703125\\
128	6.10351562499999e-05\\
181.019335983756	3.05175781249999e-05\\
256	1.52587890625e-05\\
362.038671967513	7.62939453124998e-06\\
512.000000000001	3.81469726562499e-06\\
724.077343935026	1.90734863281249e-06\\
1024	9.53674316406247e-07\\
};
\addlegendentry{$\mathcal{O}(c^{-2})$};

\end{axis}
\end{tikzpicture}
	\caption{Asymptotic convergence plot (double logarithmic scale): Error versus $c$. The oscillatory integrator \eqref{scheme} converges  asymptotically with rate $c^{-2}$ to the numerical solution of the corresponding Zakharov  limit system \eqref{zaki}.}
	\label{fig3}
\end{figure}
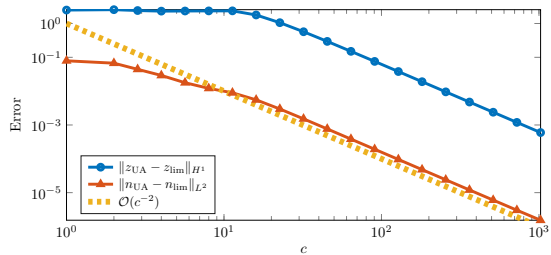
\Red 		
		\begin{rem}
The numerical experiments confirm our theoretical convergence results and in particular stress the uniform convergence property of the novel first-order \eqref{scheme} and second-order \eqref{scheme2ord} oscillatory integrators with respect to the high-plasma frequency $c$ (cf. Theorem \ref{thm:conv} and Theorem \ref{thm:conv2}, respectively) as well as  asymptotic convergence  (cf. \eqref{b:asymp}) towards the Zakharov limit system. The numerical experiments suggest that the error constant of the second-order scheme \eqref{scheme2ord} is even smaller for large values of $c$. In nonstiff regimes $c = 1$ energy conservative schemes for the KGZ system \eqref{kgz} have been naturally proposed in  literature (see, e.g., \cite{WCZ} for a classical finite difference discretization and \cite[Section 2.2.]{BaoKGZ1} for a  finite difference integrator sine pseudospectral method). The new class of oscillatory integrators is in contrast based on an exponential-type discretization which is not symplectic (see also \cite{BFS17} for the classical Klein-Gordon setting $n = - \vert z\vert^2$). Numerical schemes which are both energy conserving \emph{and} at the same time uniformly accurate pose an interesting open research question.
\end{rem}\black

\end{document}